\def\Hom{\hbox{\rm Hom\,}}
\def\Ext{\hbox{\rm Ext\,}}
\def\dimHom{\hbox{\rm dim\,\Hom\,}}
\def\dimExt{\hbox{\rm dim\,\Ext\,}}
\def\ql{\hbox{\rm ql\,}}
\def\qs{\hbox{\rm qs\,}}
\def\qt{\hbox{\rm qt\,}}
\def\udim{\hbox{\rm \underline{dim}\,}}
\def\ra{{\,\rightarrow\,}}
\def\sup{\hbox{\rm sup}}
\title[]
{Dimension vectors in regular components over wild Kronecker quivers}
\author{Bo Chen}
\email {mcebbchen@googlemail.com}
\address{Institut f\"ur Algebra und Zahlentheorie\\
                          Universit\"at Stuttgart\\
                           Pfaffenwaldring 57\\
                           D-70569, Stuttgart\\ Germany
                           }
\keywords{wild Kronecker quiver; positive root; Auslander-Reiten quiver; quasi-length.}
\thanks{}
\newtheorem{theo}{Theorem}[section]
\newtheorem{lemm}[theo]{Lemma}
\newtheorem{prop}[theo]{Proposition}
\newtheorem{ques}[theo]{Question}
\newcommand{\matindex}[1]{\mbox{\scriptsize#1}}
\newenvironment{example}[1][Example]{\begin{trivlist}
\item[\hskip \labelsep {\bfseries #1}]}{\end{trivlist}}
\newenvironment{remark}[1][Remark]{\begin{trivlist}
\item[\hskip \labelsep {\bfseries #1}]}{\end{trivlist}}
\begin{document}

\begin{abstract} Let $\mathcal{K}_n$  be the so-called  wild Kronecker quiver, i.e., a quiver
with one source and one sink and $n\geq 3$ arrows from the source to the sink. The following problems  will be studied for an arbitrary regular component $\mathcal{C}$ of the Auslander-Reiten quiver: (1) What is the relationship between dimension vectors and quasi-lengths of the indecomposable regular representations in $\mathcal{C}$?  (2) For a given natural number $d$, is there an upper bound of the number of indecomposable representations in $\mathcal{C}$ with the same length $d$? (3) When do the sets of the dimension vectors of indecomposable representations in different regular components coincide? 
\end{abstract}

\maketitle

{\footnotesize{\it Mathematics Subject Classification} (2010).
16G20,16G60}

\section{Introduction}

Quivers (without oriented cycles) and their representations over a field play an important role in representation theory of finite-dimensional algebras; they also occur in other domains including Kac-Moody Lie algebras, quantum groups.  Gabriel characterized  quivers of finite representation type, that is, having only finitely many isomorphism classes of indecomposable representation: such quivers are exactly the disjoint union of Dynkin diagrams of types $A_n$, $D_n$, $E_6$, $E_7$, $E_8$, equipped with arbitrary orientations. Moreover, the isomorphism classes of indecomposable representations correspond bijectively to the positive roots of the associated root systems.  Whereas, the underlying graphs of quivers of tame representation type were shown to be the disjoint union of Euclidean diagrams of types $\widetilde{A}_n$, $\widetilde{D}_n$, $\widetilde{E}_6$, $\widetilde{E}_7$, $\widetilde{E}_8$.  Any quiver, which is neither of finite nor of tame type, is of wild representation type.   Kac generalized Gabriel's result to arbitrary quivers (exposed in \cite{KR}):  the dimension vectors of indecomposable representations are positive roots, and moreover, for each positive real root there is a uniquely isomorphism class of indecomposable representation, whereas for each positive imaginary root there are infinitely many ones.  

Representations of quivers of  finite and tame types are well understood. However, almost all quivers are of wild representation type and their representations are very complicated.  The so-called Auslander-Reiten (\emph{AR}) quiver of a connected wild quiver consisting a preprojective component including all indecomposable projective representations, a preinjective component including indecomposable injective representations, and infinitely many regular components of the form $A^\infty_\infty$: 
\[\xymatrix@R=2pt@C=8pt{
  &&\vdots && \vdots && \vdots && \vdots &&\\
  &&\circ\ar[rd] && \circ\ar[rd] &&
         \circ\ar[rd] && \circ\ar[rd] &&\\
  \cdots&\circ\ar[ru]\ar[rd] && \circ\ar[ru]\ar[rd] && \circ\ar[ru]\ar[rd] &&
         \circ\ar[ru]\ar[rd] && \circ & \cdots\\
  &&\circ\ar[ru]\ar[rd] && \circ\ar[ru]\ar[rd] &&
         \circ\ar[ru]\ar[rd] && \circ\ar[ru]\ar[rd] &&\\
   \cdots&\circ\ar[ru] && \circ\ar[ru] && \circ \ar[ru] && \circ\ar[ru] && \circ & \cdots\\
}\]
Kac's theorem tells the existence of indecomposable representations for every positive root. But it doesn't help to determine the positions of the indecomposable representations in the components (more interesting, regular components) of the AR quiver.

Given a wild quiver $Q$, a regular component $\mathcal{C}$  of its AR quiver and a natural number $d>0$, let $\beta_{Q, \mathcal{C}}(d)$ denote the number of the indecomposable representations 
in $\mathcal{C}$ with length $d$, and let $\beta_{Q,d}=\sup_{\mathcal{C}}\beta_{Q, \mathcal{C}}(d)$ and  $\beta_{Q}=\sup_{d}\beta_{Q, d}$, where the supremes are taken over all regular components and all natural numbers $d$, respectively.  In \cite{Z}, it was proved (for any Artin wild hereditary algebra) that
the indecomposable representations in a regular component of the AR quiver are uniquely determined by their dimension vectors . It follows immediately that $\beta_{Q,\mathcal{C}}(d)$ is finite for any regular component $\mathcal{C}$ and $d>0$ and $\beta_{Q,d}$ is obvious finite since $Q$ has only finitely many vertices. However, it is not known yet  if $\beta_Q$ is finite.

In this paper, we will focus on a class of quivers and their representations over an algebraically closed field $k$: the so-called Kronecker quivers 
$\mathcal{K}_n$ with $n$ arrows
\[\xymatrix{ 2\ar@/^1pc/[rr]^{\alpha_1}\ar@/_1pc/[rr]_{\alpha_n}&\vdots& 1\\}.\]
A finite-dimensional representation of $\mathcal{K}_n$ over $k$ is simply called a $\mathcal{K}_n$-module.
It is well-known that $\mathcal{K}_n$ is of wild (resp. tame,  finite) representation type  if $n\geq 3$ (resp. $n=2$, $n=1$).
From now on,  unless stated otherwise, we always consider wild Kronecker quivers, i.e., $n\geq 3$. A representation of $\mathcal{K}_n$ over $k$ is simply called a $\mathcal{K}_n$-module. 

One of our main aims of the paper is to study the relationship between the dimension vectors of indecomposable $\mathcal{K}_n$-modules and their quasi-lengths, that is, on which layers of the regular components they locate.  For each $r\geq 0$, let $A_r$ be the $r$-th number appeared in the ordered set of the entries of the dimensional vectors of the indecomposable preprojective $\mathcal{K}_n$-modules (see Section 2).  We will see that if $(a,b)$ is an imaginary root,  then there is an indecomposable module $M$ with dimension vector $(a,b)$ and quasi-length $r$ if and only if $A_r$ is a common divisor of $a$ and $b$ (Theorem \ref{ql}). This proof is based on the construction of bricks (quasi-simple modules) for any positive imaginary root (Theorem \ref{bricks}).

A second aim of the paper is to study the indecomposable modules in a regular components having the same length.  
It will be shown for a given regular component $\mathcal{C}$ and a natural number $d$ that  $\beta_{\mathcal{K}_n}(\mathcal{C},d)\leq 2$ (Theorem \ref{bound}) and thus $\beta_{\mathcal{K}_n}\leq 2$. 
The equality holds almost everywhere in the following sense:  
for any
pair of natural numbers $r$ and $s$, there is always a regular component containing
indecomposable modules $M$ and $N$ with quasi-lengths $r$ and $s$, respectively, such that they have the same length: 
$|M|=|N|$ (Theorem \ref{samelength}).
A more interesting case is that 
two indecomposable $\mathcal{K}_n$-modules have the same length  and locate in the same $\tau$-orbit, where $\tau$ denotes the AR translation. In this case, we are able to describe the dimension vectors of the indecomposable modules (Proposition \ref{sameorbit}) like what has been done in \cite{C} for $3$-Kronecker quivers using Fibonacci numbers. 

The last aim of the paper is to study the coincidence of the sets of dimension vectors of indecomposable modules in different regular components. Many regular components may have the same set of dimension vectors. For a regular component $\mathcal{C}$, let $\udim \mathcal{C}$ the set of dimension vectors of the indecomposable modules in $\mathcal{C}$.   We will see that $\udim \mathcal{C}$ is uniquely determined by a dimension vector of an indecomposable module and its quasi-length, and also uniquely determined by two dimension vectors of indecomposable modules  not being in the same Coxeter orbits (Theorem \ref{twoorbits}).

\section{Positive roots and  real roots}

Throughout, we assume $k$ is an algebraically closed field, $\mathcal{K}_n$ is a wild Kronecker quiver with $n\geq 3$ arrows.  The representations of $\mathcal{K}_n$ over $k$ are simply called modules.  
We refer to \cite{ARS, R}
for general facts of representations of quivers and to \cite{K,KR} for the connection between the dimension vectors of indecomposable modules, and the roots of the corresponding Kac-Moody Lie algebras.

\subsection{Roots and their Coxeter orbits} The quadratic form associated to the wild Kronecker quiver $\mathcal{K}_n$ is
$q((x_1,x_2))=x_1^2+x_2^2-nx_1x_2$. A vector $(a,b)\in\mathbb{Z}^2$ is called a real root
if $q((a,b))=1$ and is called an imaginary root if $q((a,b))<0$, i.e. $\frac{n-\sqrt{n^2-4}}{2}<\frac{a}{b}<\frac{n+\sqrt{n^2-4}}{2}$.  
The dimension vector of an indecomposable module is either a positive
real root or a positive imaginary root. Conversely,
for each positive real root $(a,b)$, there are precisely one isomorphism class
of indecomposable modules with dimension
vector $(a,b)$.  These indecomposable modules are either preprojective or
 preinjective. Whereas for each positive imaginary
root $(a,b)$ there are infinitely many
indecomposable modules.
The Euler form is $\langle
(x_1,x_2),(y_1,y_2) \rangle=x_1y_1+x_2y_2-nx_1y_2$. For two
indecomposable modules $X$ and $Y$,
$\dimHom(X,Y)-\dimExt^1(X,Y)=\langle\udim X,\udim Y\rangle.$
The
Cartan matrix and the Coxeter matrix are the following:
 $$  C=\left(\begin{array}{cc}
                     1 & 0 \\
                     n & 1 \\
             \end{array}\right),
             \quad
     \Phi=-C^{-t}C=
       \left(\begin{array}{cr}
                        n^2-1 & n \\
                        -n & -1   \\
             \end{array}\right),
             \quad
     \Phi^{-1}=
       \left(\begin{array}{rc}
                        -1 & -n   \\
                        n & n^2-1 \\
             \end{array}\right).
 $$
The dimension vectors of the indecomposable modules
can be calculated using $\udim\tau M=(\udim
M)\Phi$ if $M$ is not projective and $\udim \tau^{-1}N=(\udim
N)\Phi^{-1}$ if $N$ is not injective, where $\tau$ denotes the
AR translation. A Coxeter orbit of an indecomposable module  $M$ 
consists of the indecomposable module $\tau^i M$ for all $i\in\mathbb{Z}$

\subsection{AR components} The AR-quiver consists of a preprojective component,
a preinjective
component and infinitely many regular ones.
The preprojective component is of the following form (there are
actually $n$ arrows from $P_i$ to $P_{i+1}$):
 $$ \xymatrix@R=12pt@C=15pt{
    &P_2\ar@{->}[rd]\ar@{.}[rr] &&
    P_4\ar@{->}[rd]\ar@{.}[rr]&& P_6\ar[rd] &\ldots&
    \\
    P_1\ar@{->}[ru]\ar@{.}[rr] &&
    P_3\ar@{->}[ru]\ar@{.}[rr]&&
    P_5\ar[ru]\ar@{.}[rr]&&P_7&\ldots
    \\}
 $$
Starting with $\udim P_1=(0,1)$,
$\udim P_2=(1,n)$  and $\udim I_0=(1,0)$, $\udim I_1=(n,1)$,
one can obtain all the  positive real roots by applying $\Phi^{-i}$ and $\Phi^i$, respectively.

The structure of a regular component gives rise to the following notions. An
indecomposable regular module $X$ is called quasi-simple if
the middle term of the AR sequence ending at $X$ is
indecomposable, i.e. $X$ is on the bottom of a regular component. For
each indecomposable regular module $M$, there is a unique
sequence of irreducible epimorphisms  $M=[r]X\ra [r-1]X\ra\cdots\ra [2]X\ra
[1]X=X$ for some
quasi-simple module $X$ (called quasi-top of $M$) and some $r\geq 1$ (called
quasi-length of $M$). Note that if $M=[r]X$, then the dimension
vector of $M$ is $\sum_{i=0}^{r-1}\udim\tau^i{X}$.  Similarly, there is 
a unique
sequence of irreducible monomorphisms
$Y=Y[1]\ra Y[2]\ra\ldots\ra Y[r-1]\ra Y[r]=M$ with a quasi-simple module $Y$ (called quasi-socle of $M$).

\subsection{Coordinates of positive real roots}

Let $A_0=0$ and $A_i=(\udim P_i)_1$ for $i\geq 1$.
It follows
from the Auslander-Reiten sequences
$0\ra P_i \ra P_{i+1}^n\ra P_{i+2}\ra 0$ that
$A_{i+2}=nA_{i+1}-A_i$ for all $i\geq 1$.
Thus $$A_1=1, \quad A_2=n, \quad A_3=n^2-1,\quad A_4=n^3-2n, \quad A_5=n^4-3n^2+1,\quad\ldots $$
Note that $\udim P_i=(A_{i-1},A_i)$ and each indecomposable preinjective module
has dimension vector $(A_{j+1}, A_j)$ for some $j\geq 0$.

If $n=3$, then the sequence $\{A_i\}_{i\geq 1}$ is
$1,3,8,21,55,\ldots$ and $A_i=F_{2i}$, where $F_j$ is the $j$-th Fibonacci number defined inductively: 
$F_0=0$, $F_1=1$, and $F_{j+2}=F_{j+1}+F_j$.
Some interesting facts of Fibonacci and their representation theory explaination can be found, for example, in 
\cite{C, FR}.
Like Fibonacci numbers, one can find hundreds of 
interesting identities concerning these $A_i$'s. We list several identities those will be used later on.  The proofs are straightforward.

\begin{lemm} \label{equa} Let $\mathcal{K}_n$ be a wild Kronecker quiver with $n\geq 3$ and  $i\geq 1$. Then
   \begin{enumerate}
       \item $A_{i+2}=nA_{i+1}-A_i$
       \item $A_i^2+A_{i+1}^2-nA_iA_{i+1}=1$.
       \item $A_{i+1}^2-A_{i+2}A_i=1$.
       \item $(A_{i+2}+A_{i+1})^2-(A_{i+1}+A_{i})(A_{i+2}+A_{i+3})=n+2$.
       \item $A_iA_{j+k}-A_jA_{i+k}=A_{i-j}A_k$ for all $k\geq 0$ and  $i\geq j$.
  \end{enumerate}
\end{lemm}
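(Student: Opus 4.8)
The plan is to take (1) as given---it is the defining recurrence, already extracted in the text from the Auslander--Reiten sequences $0\ra P_i\ra P_{i+1}^n\ra P_{i+2}\ra 0$---and to derive (2)--(5) from it by elementary manipulations together with one short induction. For (2) I would note that the substitution $(a,b)\mapsto(b,nb-a)$ preserves the quadratic form $q$, since $q((b,nb-a))=b^2+(nb-a)^2-nb(nb-a)=a^2+b^2-nab=q((a,b))$. By (1) the pair $(A_i,A_{i+1})$ arises from $(A_1,A_2)=(1,n)$ by iterating this substitution, so $q((A_i,A_{i+1}))=q((1,n))=1$, which is (2). (Alternatively one may simply invoke that $(A_i,A_{i+1})=\udim P_{i+1}$ is a positive real root.)

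Identity (3) is then immediate: substituting $A_{i+2}=nA_{i+1}-A_i$ gives $A_{i+1}^2-A_{i+2}A_i=A_{i+1}^2-nA_iA_{i+1}+A_i^2$, which is $1$ by (2). For (4) I would use (1) to express everything in terms of $a:=A_i$ and $b:=A_{i+1}$, namely $A_{i+2}=nb-a$ and $A_{i+3}=(n^2-1)b-na$, expand the left-hand side into $((n+1)b-a)^2-(a+b)((n^2+n-1)b-(n+1)a)$, and collect terms; the coefficients organize as $(n+2)(a^2+b^2-nab)$, which equals $n+2$ by (2). This is a short routine computation, not an obstacle.

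The only identity requiring genuine work is (5), and even there the argument is brief. Fix $j\geq 0$ and $k\geq 0$ and set
\[
 f(i)\;=\;A_iA_{j+k}-A_jA_{i+k}-A_{i-j}A_k\qquad(i\geq j).
\]
Each of the sequences $i\mapsto A_i$, $i\mapsto A_{i+k}$, $i\mapsto A_{i-j}$ satisfies the linear recurrence $x_{i+2}=nx_{i+1}-x_i$ by (1), hence so does $f$ (a linear combination of solutions with constant coefficients). A solution of a second-order linear recurrence vanishing at two consecutive indices vanishes at all larger indices, so it suffices to check $f(j)=0$ and $f(j+1)=0$. The first holds because $A_0=0$. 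The second amounts to $A_{j+1}A_{j+k}-A_jA_{j+1+k}=A_k$; to prove it, put $g(j)=A_{j+1}A_{j+k}-A_jA_{j+1+k}$ and substitute $A_{j+2}=nA_{j+1}-A_j$ and $A_{j+2+k}=nA_{j+1+k}-A_{j+k}$ into $g(j+1)$; the $n$-terms cancel and one is left with $g(j+1)=g(j)$, whence $g(j)=g(0)=A_1A_k-A_0A_{1+k}=A_k$.

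The main (indeed only) obstacle is organizing (5) cleanly, and the device of comparing two sequences that satisfy the same recurrence and agree at two consecutive points---together with the one-line telescoping argument for the base case $i=j+1$---dispatches it with no lengthy calculation. Everything else follows immediately from (1) and (2).
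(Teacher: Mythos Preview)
Your argument is correct in every part: the invariance of $q$ under $(a,b)\mapsto(b,nb-a)$ gives (2), the substitutions via (1) yield (3) and (4) exactly as you describe (the $(n+2)$ factorisation in (4) checks out), and your linear-recurrence argument for (5) is clean and valid, including the telescoping verification that $g(j)=g(0)=A_k$.

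As for comparison with the paper: there is nothing to compare. The paper does not prove this lemma at all; it simply asserts that ``the proofs are straightforward'' and moves on. Your write-up supplies precisely the kind of elementary verification the paper is gesturing at, and the device you use for (5)---recognising that the difference of the two sides satisfies the same second-order recurrence and checking two consecutive base cases---is the natural and efficient way to handle such identities. Nothing needs to be added or changed.
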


\begin{lemm}
Let $n\geq 3$ and  $X$ be an indecomposable regular $\mathcal{K}_n$-module with dimension vector $(a,b)$.
Then $\udim \tau^iX=(A_{2i+1}a-A_{2i}b, A_{2i}a-A_{2i-1}b)$ for each $i\geq 1$.
\end{lemm}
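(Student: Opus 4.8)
The plan is to prove the formula $\udim\tau^iX=(A_{2i+1}a-A_{2i}b,\ A_{2i}a-A_{2i-1}b)$ by induction on $i$, using $\udim\tau X=(\udim X)\Phi$ and the recursion $A_{j+2}=nA_{j+1}-A_j$ from Lemma \ref{equa}(1). First I would check the base case $i=1$: since $\udim\tau X=(a,b)\Phi=(a(n^2-1)-bn,\ an-b(-1))$... wait, careful with the matrix convention — the paper writes $\udim\tau M=(\udim M)\Phi$ with $\Phi=\left(\begin{smallmatrix} n^2-1 & n\\ -n & -1\end{smallmatrix}\right)$, so $\udim\tau X=(a(n^2-1)-bn,\ an-b)$. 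I must then verify this equals $(A_3a-A_2b,\ A_2a-A_1b)$, i.e. $A_3=n^2-1$, $A_2=n$, $A_1=1$, which matches the explicit values listed after Lemma \ref{equa}. (If the index bookkeeping forces $A_0=0$ to appear, that is consistent too.)

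For the inductive step, assume $\udim\tau^iX=(A_{2i+1}a-A_{2i}b,\ A_{2i}a-A_{2i-1}b)$ and compute $\udim\tau^{i+1}X=(\udim\tau^iX)\Phi$. Writing $(u,v)=\udim\tau^iX$, the first coordinate of $(u,v)\Phi$ is $u(n^2-1)-vn$ and the second is $un-v$. Substituting and collecting the coefficient of $a$ in the first coordinate gives $A_{2i+1}(n^2-1)-A_{2i}n$; the goal is to show this equals $A_{2i+3}$. Using $A_{2i+3}=nA_{2i+2}-A_{2i+1}$ and $A_{2i+2}=nA_{2i+1}-A_{2i}$ twice, one gets $A_{2i+3}=n(nA_{2i+1}-A_{2i})-A_{2i+1}=(n^2-1)A_{2i+1}-nA_{2i}$, which is exactly the claimed coefficient. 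The coefficient of $-b$ in the first coordinate is $A_{2i}(n^2-1)-A_{2i-1}n$, which by the same manipulation equals $A_{2i+2}$, as required. For the second coordinate, the coefficient of $a$ is $A_{2i+1}n-A_{2i}=A_{2i+2}$ and the coefficient of $-b$ is $A_{2i}n-A_{2i-1}=A_{2i+1}$, again by the recursion. This closes the induction.

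There is essentially no serious obstacle here: the only thing to watch is the row-vector/matrix convention (the formula $\udim\tau M=(\udim M)\Phi$ is applied on the right, so one must use $\Phi$ and not $\Phi^{t}$ in the computation) and the indexing of the $A_j$'s (in particular that the recursion $A_{j+2}=nA_{j+1}-A_j$ is valid for the range of indices encountered, namely all $j\ge 1$, and that $A_1=1$, $A_2=n$, $A_3=n^2-1$). Everything else is the two-line verification $A_{m+2}=(n^2-1)A_m-nA_{m-1}$ obtained by iterating Lemma \ref{equa}(1), which is precisely what converts the action of $\Phi$ into a shift of the index by two. One could alternatively phrase the whole argument as: $\Phi$ acting on the two-dimensional lattice has the same action as $\left(\begin{smallmatrix} A_3 & A_2 \\ -A_2 & -A_1\end{smallmatrix}\right)$, hence $\Phi^{i}$ corresponds to $\left(\begin{smallmatrix} A_{2i+1} & A_{2i} \\ -A_{2i} & -A_{2i-1}\end{smallmatrix}\right)$ by induction using the identity $A_{2i+2}=nA_{2i+1}-A_{2i}$; this packages the same computation more compactly and makes clear that the statement is purely a linear-algebra fact about powers of $\Phi$, independent of the module-theoretic origin of $X$.
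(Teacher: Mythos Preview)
Your proposal is correct and follows exactly the approach the paper indicates: the paper's own proof simply says to recall $\udim\tau X=(\udim X)\Phi$ and ``finish the proof by induction,'' which is precisely the computation you carry out in detail. Your alternative phrasing in terms of powers of $\Phi$ is just the same induction packaged in matrix form.
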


\begin{proof} Recall that $\udim \tau X=(\udim X)\Phi$ where $\Phi$ is the Coxeter matrix. One can finish the proof by induction. 
\end{proof}

\section{quasi-simple modules}

Given a wild quiver and a positive imaginary root $\underline{d}$, are there always quasi-simple modules $M$ with $\udim M=\underline{d}$? This seems to be an unsolved problem, not even for  wild Kronecker quivers.  
In this section we are going to study wild Kronecker quivers, and show the existence of the bricks, i.e.,  modules with trivial endomorphism ring for every imaginary root, which directly implies the existence of quasi-simple modules. Using this we can determine the possible quasi-lengths of the indecomposable modules with a given dimension vector.

\subsection{} Let  $\mathcal{K}_n$ be a wild Kronecker quiver with $n\geq 3$ arrows. One way to obtain a quasi-simple $\mathcal{K}_{n}$-module is to construct bricks. It is straightforward to see that a regular brick is  quasi-simple. 
Another well-known method to obtain quasi-simple modules uses embedding of a subquiver $\mathcal{K}_{n-1}$.  The following proposition can be easily obtained by some elementary calculations using Coxeter matrices.  
We denote by $\Phi_{n-1}$ and $\Phi_{n}$ the Coxeter matrix of $\mathcal{K}_{n-1}$ and $\mathcal{K}_{n}$,
respectively. 

\begin{prop} Fix an embedding of $\mathcal{K}_{n-1}$ in $\mathcal{K}_{n}$.
Let $(c,d)$ be an imaginary root of $\mathcal{K}_{n-1}$ and $M$ an indecomposable
$\mathcal{K}_{n-1}$-module with $\udim M=(c,d)$. Then

   \begin{enumerate}
       \item The vector $(c,d)$ is an imaginary root for $\mathcal{K}_{n}$.
       \item  Let $(a,b)=(c,d)\Phi_{n}$.
              Then $(a,b)$  is no longer a root for
              $\mathcal{K}_{n-1}$.
       \item  The module $M$, as an indecomposable  $\mathcal{K}_{n}$-module,
               is quasi-simple.
       \item  Any non-simple $\mathcal{K}_{n-1}$-module is
               a quasi-simple $\mathcal{K}_{n}$-module.
   \end{enumerate}
\end{prop}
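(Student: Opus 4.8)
The strategy is to reduce everything to a computation with the Coxeter matrices $\Phi_{n-1}$ and $\Phi_n$ together with the characterization of imaginary roots via the quadratic form. For part (1), I would simply note that $(c,d)$ being an imaginary root of $\mathcal{K}_{n-1}$ means $q_{n-1}((c,d)) = c^2 + d^2 - (n-1)cd < 0$; since $c,d \geq 0$ (it is a positive root, as $M$ exists), we have $cd \geq 0$ and hence $q_n((c,d)) = c^2 + d^2 - ncd = q_{n-1}((c,d)) - cd < 0$, so $(c,d)$ is an imaginary root for $\mathcal{K}_n$. (Strictly, $cd > 0$ as well, since $c=0$ or $d=0$ forces $q_{n-1} \geq 0$.) For part (2), with $(a,b) = (c,d)\Phi_n$, I would compute $q_{n-1}((a,b))$ explicitly in terms of $c,d,n$ using the formula $\Phi_n = \left(\begin{smallmatrix} n^2-1 & n \\ -n & -1\end{smallmatrix}\right)$, and show this value is neither $1$ nor negative; the key point is that $q_n$ is $\Phi_n$-invariant, so $q_n((a,b)) = q_n((c,d)) < 0$, and one then checks by direct substitution that $q_{n-1}((a,b)) - q_n((a,b)) = ab$ is large and positive enough that $q_{n-1}((a,b)) > 1$ — so $(a,b)$ is not a root of $\mathcal{K}_{n-1}$ at all.

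For part (3), the module-theoretic heart: $M$ is an indecomposable $\mathcal{K}_{n-1}$-module, hence regular (it is not the simple projective or simple injective of $\mathcal{K}_{n-1}$ since $(c,d)$ is imaginary with $c,d > 0$), so it has a well-defined quasi-length $r$ over $\mathcal{K}_{n-1}$. Viewing $M$ as a $\mathcal{K}_n$-module (via the fixed embedding, which just means the extra arrow acts as zero), I want to show it is quasi-simple, i.e., its quasi-length over $\mathcal{K}_n$ is $1$. The dimension vector of $[r]X$ over any Kronecker quiver is $\sum_{i=0}^{r-1} \udim \tau^i X$; if $M$ had quasi-length $r' \geq 2$ over $\mathcal{K}_n$ with quasi-socle or quasi-top having dimension vector $(e,f)$, then $(c,d) = \sum_{i=0}^{r'-1} (e,f)\Phi_n^{i}$, and each summand $(e,f)\Phi_n^i$ is a positive (real or imaginary) root of $\mathcal{K}_n$. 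I would argue this is impossible: for instance the first summand $(e,f)$ itself must be a positive root of $\mathcal{K}_n$ that is componentwise $\leq (c,d)$, and then using part (1)/(2)-type reasoning — the Coxeter-translate $(e,f)\Phi_n$ must again be componentwise positive and bounded by $(c,d)$ — one gets a contradiction with the growth rate of $\Phi_n$ (entries grow by a factor roughly $n^2-1 \geq 8$), forcing $r' = 1$. Alternatively, and perhaps more cleanly, one invokes that $\mathrm{End}_{\mathcal{K}_n}(M) = \mathrm{End}_{\mathcal{K}_{n-1}}(M)$ when the new arrow acts as zero is \emph{not} automatic, so instead I would use the criterion that $M$ is quasi-simple over $\mathcal{K}_n$ iff the AR sequence $0 \to \tau_n M \to E \to M \to 0$ over $\mathcal{K}_n$ has indecomposable middle term $E$; comparing $\dim E = \dim \tau_n M + \dim M$ with what it would have to be, and using $\udim \tau_n M = (\udim M)\Phi_n$ which has a \emph{negative} second coordinate growth pattern incompatible with $M$ sitting above a shorter module, yields the claim. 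I expect the cleanest route is the dimension-vector/root argument just sketched.

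Part (4) is then a corollary: "non-simple $\mathcal{K}_{n-1}$-module" means indecomposable and not equal to the two simples, hence $(c,d)$ is a real or imaginary root of $\mathcal{K}_{n-1}$ with $c,d > 0$; in the imaginary case part (3) applies directly, and in the real case one runs the same argument (the summand-decomposition obstruction only used that $(c,d)$ is a positive root of $\mathcal{K}_n$ with both coordinates positive and that $\Phi_n$ expands, not imaginarity). I expect the main obstacle to be part (3): making rigorous that a $\mathcal{K}_{n-1}$-module cannot acquire a larger quasi-length when reinterpreted over $\mathcal{K}_n$, since one must rule out \emph{all} possible quasi-socles over $\mathcal{K}_n$, not just the naive ones; the growth-rate estimate on $\Phi_n$ together with the fact that $(c,d)\Phi_n$ is no longer a $\mathcal{K}_{n-1}$-root (part (2)) is the lever that makes this work, because it pins down that the $\mathcal{K}_n$-orbit of $M$ genuinely leaves the $\mathcal{K}_{n-1}$-picture after one step.
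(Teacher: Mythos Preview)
The paper gives no proof beyond the remark that the proposition ``can be easily obtained by some elementary calculations using Coxeter matrices,'' so your strategy of computing with $q_{n-1}$, $q_n$ and $\Phi_n$ is precisely what is intended, and your arguments for (1) and (2) are correct.

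For (3), however, your sketch has a real gap. The ``growth rate of $\Phi_n$'' heuristic cannot work on its own: there \emph{do} exist indecomposable $\mathcal{K}_n$-modules of dimension vector $(c,d)$ with quasi-length $r'\geq 2$ whenever $A_{r'}$ divides both $c$ and $d$ (this is exactly what the paper establishes later), so dimension-vector bookkeeping alone never forces $r'=1$. You must genuinely use that $M$ is a $\mathcal{K}_{n-1}$-module. The clean version of the argument you are circling is this: if $r'\geq 2$ with quasi-socle $Y$, then each quasi-composition factor $\tau_n^{-i}Y$ ($0\leq i\leq r'-1$) is a subquotient of $M$, hence an indecomposable $\mathcal{K}_{n-1}$-module, hence its dimension vector is a root of $\mathcal{K}_{n-1}$. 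Now the $\Phi_n^{-1}$-analogue of your computation in (2) (which follows from (2) itself via the symmetry $(e,f)\Phi_n^{-1}=\sigma\bigl((f,e)\Phi_n\bigr)$ with $\sigma$ the coordinate swap) shows that for a positive root $(e,f)$ of $\mathcal{K}_{n-1}$ with $e,f>0$, the vector $(e,f)\Phi_n^{-1}$ is \emph{not} a root of $\mathcal{K}_{n-1}$, with the sole exception $(n-1,1)\Phi_n^{-1}=(1,n-1)$. This already forces $r'\leq 2$, and in the exceptional case one would have $0\to I_1^{(n-1)}\to M\to P_2^{(n-1)}\to 0$ over $\mathcal{K}_{n-1}$, which splits since $P_2^{(n-1)}$ is projective, contradicting indecomposability of $M$. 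Your treatment of (4) (``in the real case one runs the same argument'') needs exactly this refinement, since the statement of (2) is only for imaginary $(c,d)$ and the exception above is a real root.
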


\subsection{Using coordinates of real roots} Using the coordinates of the positive real roots $A_i$ introduced in previous section, one obtains a method to tell for a certain imaginary root  $\underline{d}$ that there are only quasi-simple modules with dimension vector $\underline{d}$.

\begin{lemm} Let $\mathcal{K}_n$ be a wild Kronecker quiver with $n\geq 3$ arrows.
       Let $r\geq 1$ be an odd number and denote by
       $s_r=A_r-A_{r-2}+A_{r-4}+\ldots+(-1)^{\frac{r-1}{2}}A_1$. Then $A_{r+1}=ns_r$.
\end{lemm}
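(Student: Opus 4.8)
The plan is to prove the identity $A_{r+1}=ns_r$ by induction on the odd integers $r\geq 1$. The base case $r=1$ is immediate: by definition $s_1=A_1=1$ and $A_2=n$, so $A_2=ns_1$.

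For the inductive step, suppose the identity holds for some odd $r\geq 1$ and pass to $r+2$. The key observation is that the alternating sums satisfy a simple two-step recursion of their own. Writing out the definition of $s_{r+2}$ and factoring out the tail,
\[
s_{r+2}=A_{r+2}-A_{r}+A_{r-2}-\cdots+(-1)^{\frac{r+1}{2}}A_1
       =A_{r+2}-\bigl(A_{r}-A_{r-2}+\cdots+(-1)^{\frac{r-1}{2}}A_1\bigr)
       =A_{r+2}-s_r .
\]
Multiplying by $n$ and invoking the inductive hypothesis $ns_r=A_{r+1}$ gives $ns_{r+2}=nA_{r+2}-ns_r=nA_{r+2}-A_{r+1}$, and by Lemma~\ref{equa}(1) (the defining recursion $A_{i+2}=nA_{i+1}-A_i$ with $i=r+1$) the right-hand side is exactly $A_{r+3}$. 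Hence $ns_{r+2}=A_{(r+2)+1}$, completing the induction.

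I do not anticipate any real obstacle; the argument is a one-line telescoping combined with the linear recursion for the $A_i$. The only point that needs a moment of care is the sign bookkeeping in the factorization above, namely checking that $(-1)^{\frac{r+1}{2}}=-(-1)^{\frac{r-1}{2}}$ so that the portion of $s_{r+2}$ after its leading term $A_{r+2}$ is precisely $-s_r$; once this is verified, everything else is routine.
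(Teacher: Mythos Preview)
Your proof is correct and follows essentially the same approach as the paper: induction on odd $r$ with base case $r=1$, using $s_{r+2}=A_{r+2}-s_r$ together with the recursion $A_{r+3}=nA_{r+2}-A_{r+1}$ and the inductive hypothesis $A_{r+1}=ns_r$. The only difference is cosmetic---you compute $ns_{r+2}$ and show it equals $A_{r+3}$, while the paper starts from $A_{r+3}$ and shows it equals $ns_{r+2}$.
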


\begin{proof}
We use induction on $r$. This is obvious for $r=1$ since $s_1=A_1=1$ and $A_2=n$.
Now let $r\geq 2$, then
\begin{displaymath}
\begin{array}{rcl}A_{r+3} & = & nA_{r+2}-A_{r+1}\\
              & = & n(A_{r+2}-A_{r}+A_{r-2}+\ldots+(-1)(-1)^{\frac{r-1}{2}}A_1)\\
           & = & ns_{r+2}\\
\end{array}
\end{displaymath}
This finishes the proof.
\end{proof}

Let $X$ be a quasi-simple $\mathcal{K}_n$-module.  We define inductively a sequence
of indecomposable modules as follows:  Let $X_1=X$. If $X_i$ is defined,
then $X_{i+1}$ is defined to be the unique indecomposable module $Y$ such that
there is an irreducible monomorphism $X_i\ra Y$ if $i$ is odd, and
there is an irreducible epimorphism $Y\ra X_i$ if $i$ is even. It is clear that the quasi-length  $\ql(X_i)=i$.

\begin{lemm}\label{dimensionvector} Let $\mathcal{K}_n$ be a wild Kronecker quiver with $n\geq 3$ arrows
 and  $X$ be a quasi-simple module with dimension vector $(a,b)$.
Then
\begin{displaymath}
\udim X_r=\left\{\begin{array}{ll}A_r\udim X_1=A_r(a,b), & $r$\,\ odd;\\
                                  s_{r-1}\udim X_2=A_r(b,nb-a), & $r$\,\ even.\\
          \end{array}\right.
\end{displaymath}
\end{lemm}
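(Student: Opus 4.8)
The plan is to prove both formulas simultaneously by induction on $r$, exploiting the alternating pattern in the construction of the $X_i$ (irreducible monomorphisms at odd steps, irreducible epimorphisms at even steps) and the recursion $A_{i+2}=nA_{i+1}-A_i$ from Lemma \ref{equa}(1). First I would record the base cases. For $r=1$ we have $X_1=X$ with $\udim X_1=(a,b)=A_1(a,b)$ since $A_1=1$. For $r=2$, since there is an irreducible monomorphism $X_1\ra X_2$ and the quasi-socle is $X_1$, the module $X_2$ has quasi-length $2$ with quasi-socle $X=X_1$; hence $\udim X_2=\udim X_1+\udim\tau^{-1}X_1$. Using $\udim\tau^{-1}N=(\udim N)\Phi^{-1}$ with the given $\Phi^{-1}$, a direct computation gives $\udim X_2=(b,nb-a)$, which matches $A_2(b,nb-a)/A_2 = (b,nb-a)$; note $s_1=A_1=1$ so $s_1\udim X_2=\udim X_2$, and also $A_2=n$ but the even formula as written reads $\udim X_r=A_r(b,nb-a)$, so for $r=2$ this should be read consistently with the stated normalization — I would double-check the indexing here, since the two descriptions $s_{r-1}\udim X_2$ and $A_r(b,nb-a)$ force $s_{r-1}=A_r$ for even $r$, which is exactly Lemma (the one preceding this one) applied with odd index $r-1$ giving $A_r=ns_{r-1}$; so in fact the correct reading is that the formula is $\udim X_r=s_{r-1}\udim X_2$ for even $r$, and the ``$A_r(b,nb-a)$'' is shorthand that I would either justify via $s_{r-1}$ or replace.

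For the inductive step I would treat odd-to-even and even-to-odd transitions separately. Suppose the formula holds for all indices up to $r$. If $r$ is odd, then $X_{r+1}$ is obtained from $X_r$ by an irreducible monomorphism $X_r\ra X_{r+1}$, so $X_{r+1}=X[r+1]$ has quasi-socle $X$ and quasi-length $r+1$; thus $\udim X_{r+1}=\sum_{i=0}^{r}\udim\tau^{-i}X$. Alternatively, and more cleanly, I would use the AR-sequence / mesh relation: for the chain of irreducible maps one has $\udim X_{r+1}=\udim X_r+(\text{dimension vector of the relevant }\tau^{\pm 1}\text{-translate of }X_{r-1})$, which reduces the computation to the two previous cases. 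If $r$ is even, $X_{r+1}$ is obtained via an irreducible epimorphism $X_{r+1}\ra X_r$, so $X_{r+1}=[r+1]X$ with quasi-top $X$ and $\udim X_{r+1}=\sum_{i=0}^{r}\udim\tau^{i}X$; again this can be rewritten as $\udim X_{r+1}=\udim X_r + \udim(\text{appropriate translate})$. In each case I would substitute the inductive formulas for $\udim X_r$ and $\udim X_{r-1}$, apply the relevant power of $\Phi^{\pm 1}$ (or use the explicit $\udim\tau^i X=(A_{2i+1}a-A_{2i}b,\,A_{2i}a-A_{2i-1}b)$ from the second Lemma of Section 2), and collapse the resulting expression using $A_{i+2}=nA_{i+1}-A_i$ together with identity Lemma \ref{equa}(5), which controls products like $A_iA_{j+k}-A_jA_{i+k}$ and is precisely what is needed when combining $A_r(a,b)$-type and $A_r(b,nb-a)$-type terms.

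The main obstacle I anticipate is purely bookkeeping: keeping the parity-dependent alternation straight so that at each step one knows whether $X_{r+1}$ is a quasi-submodule (quasi-socle $X$) or a quasi-quotient (quasi-top $X$), and correspondingly whether the new summand involves $\tau^{-i}X$ or $\tau^i X$. The dimension-vector sums $\sum\udim\tau^{\pm i}X$ telescope nicely, but one must verify that the telescoped sum equals $A_{r+1}$ times the appropriate vector; this is where I expect to lean on the identities in Lemma \ref{equa}, particularly (5), and on the relation $A_{r+1}=ns_{r-1}$ for odd $r-1$ to reconcile the $s_{r-1}$-form with the $A_{r+1}$-form. Once the algebraic identities are in place the induction closes without further difficulty. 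A cleaner alternative, which I would mention, is to observe that the sequence $(X_i)$ lies on a single ``zigzag path'' through the regular component, and its dimension vectors are governed by a linear recurrence in $r$ with the same characteristic polynomial as $(A_i)$, namely $t^2-nt+1$; identifying two initial terms then pins down the closed form immediately, sidestepping the parity case analysis.
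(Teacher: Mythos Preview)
Your proposal is correct and follows essentially the same route as the paper: induction on $r$, split by parity, using the mesh/AR relation $\udim X_{r+1}=\udim \tau^{\pm 1}X_r+\udim X_r-\udim X_{r-1}$ together with the recursion for the $A_i$ (and the relation $A_r=ns_{r-1}$ you identify). The paper's write-up is a bit slicker in that it never unpacks coordinates or invokes Lemma~\ref{equa}(5); it works entirely in terms of the vectors $\udim X_1$ and $\udim X_2$, using the two identities $\udim\tau^{-1}X_1+\udim X_1=\udim X_2$ and $\udim\tau X_2+\udim X_2=n^2\udim X_1$, which collapse the induction immediately---note in particular that $\udim X_2=n(b,nb-a)$, not $(b,nb-a)$, which resolves the indexing worry you raised.
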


\begin{proof}
 This holds obviously for  $r=1$ and $r=2$.
Let $r+1\geq 3$ be odd. Then
\begin{displaymath}
  \begin{array}{rcl}\udim X_{r+1} & = & \udim \tau X_r+\udim X_r-\udim X_{r-1}\\
 & \stackrel{induction}{=}& s_{r-1}(\udim \tau X_2+\udim X_2)-A_{r-1}\udim X_1\\
    & = & s_{r-1}n^2\udim X_1-A_{r-1}\udim X_1 \\
   & = & (nA_r-A_{r-1})\udim X_1\\
  & = & A_{r+1}\udim X_1.
  \end{array}
\end{displaymath}
Let $r+1\geq 4$  be even. Then
\begin{displaymath}
  \begin{array}{rcl}
  \udim X_{r+1} & = & \udim \tau^{-1} X_r+\udim X_r-\udim X_{r-1}\\
 & \stackrel{induction}{=}& A_r(\udim\tau^{-1} X_1+\udim X_1)-s_{r-2}\udim X_2\\
    & = & (A_r-s_{r-2})\udim X_2 \\
   & = & s_r\udim X_2.
  \end{array}
\end{displaymath}
Since $\udim X_2=(nb,n^2b-na)$, we have $s_{r-1}\udim X_2=s_{r-1}n(b,nb-a)=A_r(b,nb-a)$,  if $r$ is even.
\end{proof}

As a direct consequence, the following proposition holds. 

\begin{prop}\label{commondivisor}
Let $\mathcal{K}_n$ be a wild Kronecker quiver with $n\geq 3$ arrows and $(a,b)$ be an imaginary root.   
 If $A_t$ is not a common divisor of  $a$ and $b$ for any $t\geq 2$,
           then every indecomposable $\mathcal{K}_n$-module $M$ with $\udim M=(a,b)$ is quasi-simple.
\end{prop}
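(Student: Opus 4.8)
The plan is to derive the statement directly from Lemma~\ref{dimensionvector}, by reducing an arbitrary indecomposable of a fixed quasi-length to a $\tau$-translate of one of the modules $X_r$ constructed just above, and then invoking the integrality of the Coxeter matrix.

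Suppose, for contradiction, that some indecomposable $M$ with $\udim M=(a,b)$ has $r:=\ql(M)\geq 2$; the goal is to contradict the hypothesis. Let $\mathcal{C}$ be the regular component containing $M$ and fix any quasi-simple module $C$ in $\mathcal{C}$, say with $\udim C=(c,d)$. Forming the sequence $C=X_1,X_2,\dots$ as in the construction preceding Lemma~\ref{dimensionvector}, one obtains an indecomposable $X_r\in\mathcal{C}$ with $\ql(X_r)=r$. Since $\mathcal{C}$ has shape $A^\infty_\infty$, its quasi-simple modules form a single $\tau$-orbit; as every indecomposable of quasi-length $r$ equals $[r]Y$ for its quasi-top $Y$ and $\tau([r]Y)=[r](\tau Y)$, all indecomposables of quasi-length $r$ in $\mathcal{C}$ likewise form a single $\tau$-orbit. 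In particular $M=\tau^{j}X_r$ for some $j\in\mathbb{Z}$, whence $\udim M=(\udim X_r)\Phi^{j}$.

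Now Lemma~\ref{dimensionvector} gives $\udim X_r=A_r\,u$, where $u=(c,d)$ if $r$ is odd and $u=(d,nd-c)$ if $r$ is even; in either case $u\in\mathbb{Z}^2$. Since $\Phi$ has integer entries, $u\Phi^{j}\in\mathbb{Z}^2$, and therefore $\udim M=A_r\,(u\Phi^{j})$, i.e.\ $A_r$ divides both $a$ and $b$. As $r\geq 2$, this contradicts the assumption that no $A_t$ with $t\geq 2$ is a common divisor of $a$ and $b$; hence $r=1$ and $M$ is quasi-simple.

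The only step that deserves a little care is the passage from an arbitrary indecomposable of quasi-length $r$ in $\mathcal{C}$ to the particular module $X_r$: this is precisely where one uses that $\mathcal{C}\cong A^\infty_\infty$ and that its quasi-simples — hence all indecomposables of a fixed quasi-length — lie in a single $\tau$-orbit, so that the choice of the starting quasi-simple $C$ is irrelevant. Everything after that is purely formal, resting on Lemma~\ref{dimensionvector} and the integrality of the Coxeter matrix; this is why the statement can be recorded simply as a direct consequence, and I do not anticipate any real obstacle.
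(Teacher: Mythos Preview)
Your argument is correct and is essentially the approach the paper has in mind: it records the proposition as a ``direct consequence'' of Lemma~\ref{dimensionvector}, and the contrapositive you spell out (if $\ql(M)=r\geq 2$ then $A_r\mid a$ and $A_r\mid b$) is exactly that consequence. One small simplification: rather than fixing an arbitrary quasi-simple $C$ and then invoking $\tau^{j}$ and the integrality of $\Phi^{\pm 1}$, you can choose the quasi-simple $X$ so that $M=X_r$ directly (trace the zigzag back down from $M$), which makes the appeal to Lemma~\ref{dimensionvector} immediate.
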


\begin{example} Let $n=3$. Then 
indecomposable $K_3$-modules with dimension vectors $(100,100)$, $(100, 200)$ and $(100, 96)$ are all quasi-simple, because $3,8,21,55$ are not divisor of $100$.   
\end{example}

\begin{ques}Let $(a,b)$ be an imaginary root for $\mathcal{K}_n$. If $M$ is an indecomposable module with dimension vector $\udim M=(a,b)$ and quasi-length $r$, then  $A_r$ is a common divisor of $a$ and $b$ by Lemma \ref{dimensionvector}. Conversely, if $(a,b)=A_r(a',b')$,  is there  an indecomposable $\mathcal{K}_n$-module $M$ with dimension vector $\udim M=(a,b)$ and quasi-length $\ql(M)=r$?  Obviously, if there is a quasi-simple module $N$ with dimension vector $\udim N=(a', b')$, then we may take  $M=N_r$. So the question is:  Given an imaginary root, is there always a quasi-simple module?   
\end{ques}

Now we start to show the existence the bricks, which in turn gives an affirmative answer to the previous question.  

\begin{theo}\label{bricks} Let $\mathcal{K}_n$ be a wild Kronecker quiver with $n\geq 3$. Given an imaginary root $\underline{d}$,  there is 
always a brick $M$, (which is thus  quasi-simple),  with dimension vector $\udim M=\underline{d}$.   
\end{theo}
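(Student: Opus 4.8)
The plan is to construct, for a given positive imaginary root $\underline{d}=(a,b)$, an explicit representation $M$ of $\mathcal{K}_n$ with dimension vector $(a,b)$ and trivial endomorphism ring; since a regular brick is automatically quasi-simple, this also settles the question raised before the theorem. By symmetry (duality interchanging the two arms of $\mathcal{K}_n$, equivalently replacing $(a,b)$ by $(b,a)$) I may assume $a\le b$. Writing $M$ as a pair of linear maps $k^{a}\to k^{b}$ given by matrices $B_1,\dots,B_n$, the condition that $\phi=(\phi_1,\phi_2)\in\End(M)$ amounts to $\phi_2 B_i=B_i\phi_1$ for all $i$; I want to choose the $B_i$ so that this forces $\phi_1,\phi_2$ to be equal scalars. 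Note $q((a,b))<0$ means $a/b$ lies strictly between the two roots of $x^2-nx+1$, which gives room to pick the sizes of ``generic'' blocks.

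The key construction: take $B_1$ and $B_2$ to encode a single indecomposable "generic-enough" pair — for instance $B_1=\begin{pmatrix}I_a\\ 0\end{pmatrix}$ and $B_2=\begin{pmatrix}0\\ I_a\end{pmatrix}$ padded appropriately, or better, choose $B_1,B_2$ so that the pencil $B_1+\lambda B_2$ already has the correct Kronecker-type structure and few symmetries; then use the remaining arrows $B_3,\dots,B_n$ (we have at least one, since $n\ge 3$) to "rigidify": pick $B_3$ to be a generic matrix, or a companion-type matrix with $a$ distinct eigenvalues chosen outside a suitable finite set, so that $\phi_2 B_3=B_3\phi_1$ together with the relations from $B_1,B_2$ forces $\phi_1$ to be diagonal in a fixed basis and then forces all diagonal entries equal. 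First I would treat the "balanced-ish" case where $a$ and $b$ are close (so the pencil $B_1,B_2$ contributes most of the dimension and is nearly square); then handle the general imaginary root by an inductive or gluing step, possibly using the identities of Lemma \ref{equa} or the reduction to $\tau$-shifts (every imaginary root is a $\tau^{\pm}$-translate of one with smaller coordinates, and $\tau$ preserves the brick property and the quasi-simple property on regular modules) to push down to a base range of roots which can be checked by an explicit normal form.

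The main obstacle I expect is the case analysis at the boundary of the imaginary cone, i.e.\ when $a/b$ is close to $\tfrac{n-\sqrt{n^2-4}}{2}$ — there the "generic pencil" block is very far from square and one must be careful that the extra arrows actually cut down the endomorphisms rather than just the pencil being decomposable. A clean way around this is: first reduce to a convenient fundamental domain for the Coxeter (hence $\tau$) action on imaginary roots — every imaginary root $(a,b)$ can, by applying $\Phi^{\pm 1}$ finitely often, be brought to the central region $\tfrac{1}{\,?\,}\le a/b\le\,?$ around the fixed direction — and then only construct bricks there, transporting back by $\tau$-powers (using that $\tau$ of a regular brick is again a regular brick, being an Auslander–Reiten translate of a module with no self-extensions in a tube-free setting). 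So the real content is: (i) $\tau$ acts on imaginary roots with a bounded fundamental domain near the Perron direction; (ii) for $(a,b)$ in that domain, an explicit $n$-tuple of matrices (two giving a near-square indecomposable Kronecker pencil, the rest generic/companion matrices) has trivial endomorphism ring, which is a finite linear-algebra verification; (iii) bricks are quasi-simple, which is immediate since a brick lying in a regular component must sit at the bottom of its tube (any proper epimorphism from it to its quasi-top would split off, contradicting indecomposability of the relevant AR middle term). Steps (i) and (iii) are short; step (ii), the genericity argument ensuring rigidity, is where the work lies, and I would organize it by first diagonalizing $\phi_1$ using $B_3$ and then using $B_1,B_2$ to equate the eigenvalues.
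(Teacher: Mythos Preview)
Your overall architecture is the paper's: reduce by duality, push the root into a convenient region via the Coxeter/$\tau$-action, construct an explicit brick there, and transport back (using that $\tau$ of a regular brick is again a regular brick). Your step~(iii) is also exactly what the paper uses as a one-line remark. The paper's ``fundamental domain'' is organized slightly differently: it writes $\underline d=(rb+s,b)$ with $0\le r\le n-1$, $0\le s<b$, handles $1\le r\le n-2$ (and $r=n-1$, $s=0$) by explicit matrices, handles $r=0$ by duality, and reserves the $\tau$-reduction solely for the boundary case $r=n-1$, $s>0$, where the explicit scheme would require $n{+}1$ arrows.

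Where your proposal parts from the paper, and where it has a genuine gap, is step~(ii). The paper does not use genericity or companion matrices at all: for each $(r,s)$ it writes down $r{+}1$ or $r{+}2$ completely explicit $a\times b$ matrices, namely stacks of $b\times b$ identity blocks in shifted positions plus one matrix obtained by deleting the last row and inserting a zero row on top. For instance, for $r=1$, $s>0$ the three nonzero arrows are $\left(\begin{smallmatrix}I\\0_s\end{smallmatrix}\right)$, $\left(\begin{smallmatrix}0_s\\I\end{smallmatrix}\right)$, and the one-row shift of the first; the brick check is then direct. Your sketch, by contrast, is not yet a proof: the $B_i$ are $b\times a$ with $a\neq b$, so a ``companion-type $B_3$ with $a$ distinct eigenvalues'' is not defined, and the plan ``diagonalize $\phi_1$ via $B_3$'' does not make literal sense for a rectangular matrix. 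Moreover your proposed $B_1,B_2$ already give a \emph{decomposable} $\mathcal K_2$-representation whenever $b\ge 2a$ (e.g.\ for $n=3$, $(a,b)=(2,4)$ it splits into two copies of the $(1,2)$-preinjective), so the putative $B_3$ must perform nontrivial gluing that you have not specified. The paper's explicit block matrices sidestep both problems.
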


\begin{proof} Let $\underline{d}=(rb+s,b)$ with $0\leq r\leq  n-1$, $0\leq s<b$.  We construct the non-zero (block) matrices $\alpha_{i, 1\leq i\leq n}$ for the $n$ arrows, such that a representation of $\mathcal{K}_n$ with dimension vector $\underline{d}$ and these matrices is a brick.  We denote by $I$ the $b\times b$ identity matrix  and by $0_m$ the $m\times b$ matrix with zero entries for $m>0$.   
For a matrix $M$ we denote by $F(M)$ the matrix obtained from $M$ by deleting the last row and inserting
$0_1$ as the new first row.

\begin{enumerate}
\item \underline{Case $r=1, s=0$}.   Let $\alpha_1=J$, $\alpha_2=I$ and $\alpha_3=J'$, where $J$ is the $b\times b$ Jordan matrix  with eigenvalue $0$ and $J'$ is the transpose of $J$.

\item \underline{Case $r=1, 0<s<b$}.  Let $\alpha_i$ be the following $(b+s)\times b$ matrices: 
\[\alpha_1=\left(\begin{array}{c} I\\ 0_s\end{array}\right),\,\ \alpha_2= \left(\begin{array}{c} 0_s\\ I\end{array}\right),\,\ \alpha_3= F(\alpha_1)=\left(\begin{array}{c} 0_{1}\\I\\ 0_{s-1}\end{array}\right).\]  In case $s=1$, we have $\alpha_2=\alpha_3$ and thus two non-zero matrices are sufficient. 

\item \underline{Case $2\leq r\leq n-1 , s=0$} . Let $\alpha_i$ be the following $r\times 1$ block matrices: 
\[
\alpha_1=\left(\begin{array}{c} I\\ 0_b\\ \vdots \\0_b \end{array}\right),\,\
\alpha_{i,(2\leq i\leq r)}=
\begin{blockarray}{cccl}
\begin{block}{(ccc)l} 
& \vdots &  &  \\  & 0_b &  & \\ & I &  &\matindex{i}\\ & 0_b &  & \\ & \vdots &  &  \\ 
\end{block}  
\end{blockarray}, \,\
\alpha_{r+1}=F(\alpha_1).
\]

\item \underline{Case  $2\leq r\leq n-2 (n\geq 4), 0<s<b$}. 
Let $\alpha_i$ be the following  $(r+1)\times 1$ block matrices:
\[
\alpha_1=\left(\begin{array}{c} I\\ 0_b\\  \vdots \\ 0_b \\ 0_s\end{array}\right),\,\
\alpha_{i,(2\leq i\leq r)}=
\begin{blockarray}{cccl}
\begin{block}{(ccc)l} 
& \vdots &  &  \\ & 0_b &  & \\ & I &  &\matindex{i}\\ & 0_b &  & \\ & \vdots &  &  \\ & 0_b &  & 
\\& 0_s & &\\
\end{block}  
\end{blockarray}, \,\
\alpha_{r+1}=\left(\begin{array}{c} 0_{s}\\ 0_b\\ \vdots \\ 0_b  \\ I \end{array}\right),\,\
\alpha_{r+2}=F(\alpha_1).
\]

\item \underline{Case $r=0$, i.e., $\underline{d}=(s,b)=(s,ts+c)$,  $1\leq t\leq n-2$,  $c\leq s$}.  This is the dual case of (1)--(4).  

\item \underline{Case $r=n-1, 0<s<b$, i.e.,  $\underline{d}=((n-1)b+s,b)$}. 
Consider the imaginary roots  $(a_i,b_i)=\underline{d_i}=\underline{d}\,\Phi_n^{-i}$ and choose the minimal $j$ such that $a_{j+1}\leq b_{j+1}$. If there is some $i<j+1$ such that 
$\underline{d_i}=(a_i,b_i)=(r_ib_i+s_i, b_i)$ with $1\leq r_i\leq n-2$ and $0\leq s_i\leq b_i$, then following (1)--(4) let $N$ be a brick with dimension vector $\underline{d_i}$ and $M=\tau^iN$.  Otherwise, $\underline{d_j}=(a_j,b_j)=((n-1)b_j+s_j, b_j)$ with $0< s_j<b_j$  and $\underline{d_{j+1}}=(a_{j+1}, b_{j+1})=\underline{d_j}\,\Phi_n^{-1}=(b_j-s_j, (n-1)(b_j-s_j)-s_j)$. Since 
$a_{j+1}\leq b_{j+1}$, $\underline{d_{j+1}}=(b_j-s_j, r(b_j-s_j)+c)$ for some $1\leq r\leq n-2$ and $0\leq c<b_j-s_j$. Thus by (5) there is a brick $N$ with $\udim N=\underline{d_{j+1}}$. Take $M=\tau^{j+1}N$.  
In both of above situations, $M$ is a brick with $\udim M=\underline{d}$.

\item \underline{Case $r=0$, $\underline{d}=(s,b)=(s,(n-1)s+c)$, $c<s$}. Duality of (6). 
\end{enumerate}
The proof is completed. 
 \end{proof}

The following theorem is a  direct consequence of previous discussion.  
\begin{theo}\label{ql}
Let $\mathcal{K}_n$ be a wild Kronecker quiver with $n\geq 3$ arrows and  $\underline{d}=(a,b)$ be an imaginary root.  Then $a$ and $b$ have a common divisor $A_r$, i.e., $(a,b)=A_r(a',b')$ if and only if there exists an indecomposable module $M$ with $\udim M=\underline{d}$ and quasi-length $r$. 
\end{theo}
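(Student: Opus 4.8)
The plan is to prove the two directions separately, leaning on the two main results already established. For the ``only if'' direction, suppose $M$ is an indecomposable module with $\udim M = \underline{d} = (a,b)$ and quasi-length $r$. Let $X$ be its quasi-socle, so that $M \cong X_r$ in the notation introduced before Lemma \ref{dimensionvector}. Then Lemma \ref{dimensionvector} gives $\udim M = \udim X_r = A_r\, \udim X_1$ when $r$ is odd, and $\udim M = A_r(b',nb'-a')$ when $r$ is even (writing $\udim X = (a',b')$); in either case $A_r$ divides both coordinates of $\underline{d}$, so $(a,b) = A_r(a'',b'')$ for suitable integers $a'',b''$. This direction is essentially a restatement of the computation already recorded in the Question preceding Theorem \ref{bricks}.

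For the ``if'' direction, suppose $(a,b) = A_r(a',b')$. The key point is that $(a',b')$ is itself an imaginary root: since $q$ is the quadratic form and $q((a,b)) = A_r^2\, q((a',b')) < 0$, we get $q((a',b')) < 0$. Now apply Theorem \ref{bricks} to the imaginary root $(a',b')$: there exists a brick $N$ with $\udim N = (a',b')$, and since a regular brick is quasi-simple, $N$ is quasi-simple. Form the module $N_r$ built from $N$ as in the construction before Lemma \ref{dimensionvector}; it is indecomposable with quasi-length $r$. By Lemma \ref{dimensionvector}, $\udim N_r = A_r\, \udim N = A_r(a',b') = (a,b)$ when $r$ is odd. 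When $r$ is even, Lemma \ref{dimensionvector} instead yields $\udim N_r = A_r(b', nb'-a')$, which is the dimension vector of $N_r$ where $N$ is the quasi-\emph{socle}; to land on $(a,b) = A_r(a',b')$ one should instead realize $(a',b')$ as the quasi-top, i.e. take the quasi-simple brick $N$ to sit at the top of the coray. Concretely, apply Theorem \ref{bricks} to the imaginary root $(nb'-a', b')$ (which is $\tau$ applied, hence still imaginary and still of the form handled by Theorem \ref{bricks}) to get a quasi-simple brick $N'$, and set $M = N'_r$; then by the even case of Lemma \ref{dimensionvector}, $\udim M = A_r(b', n b' - (nb'-a')) = A_r(b',a')$\,---\,so in fact the cleanest route is to observe that the construction $X \mapsto X_r$ and its dual give both $A_r(a',b')$ and $A_r(b',nb'-a')$ as attainable dimension vectors over all choices of quasi-simple brick, and every imaginary root is hit. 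I would phrase it as: by Theorem \ref{bricks} there is a quasi-simple module with dimension vector the appropriate $\tau$-power of $(a',b')$, and the corresponding $M = N_r$ (resp. its dual) has $\udim M = (a,b)$ and $\ql(M) = r$.

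The main obstacle is the parity bookkeeping in the even case of Lemma \ref{dimensionvector}: the formula $\udim X_r = A_r(b,nb-a)$ means the ``bottom'' brick's dimension vector $(a,b)$ is not literally $(a',b')$ but its image under a Coxeter-type twist, so one must check that $(nb'-a', b')$ (or whichever vector is needed) is again an imaginary root to which Theorem \ref{bricks} applies. This is immediate because the set of imaginary roots is $\tau$-stable (the Coxeter transformation preserves $q$) and Theorem \ref{bricks} covers \emph{every} imaginary root, not just those in a fundamental domain. Once that is noted, no further computation is required: combine the two directions and the theorem follows. I would keep the write-up to a few lines, citing Lemma \ref{dimensionvector} for the arithmetic and Theorem \ref{bricks} for existence, and simply remark that the even-$r$ case is handled by replacing $(a',b')$ with its $\tau$-translate before invoking Theorem \ref{bricks}.
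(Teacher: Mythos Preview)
Your approach matches the paper's: the theorem is recorded there as ``a direct consequence of previous discussion,'' meaning exactly the combination of Lemma~\ref{dimensionvector} (for the forward implication) and Theorem~\ref{bricks} (for the converse) that you describe. You are also right to flag the even-$r$ bookkeeping, which the paper's informal discussion in the preceding Question glosses over.

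Your fix for even $r$ slips, though. The vector $(nb'-a',b')$ you wrote down yields, as you yourself computed, $A_r(b',a')$ rather than $A_r(a',b')$; and neither this vector nor the one you actually need is a $\tau$-power of $(a',b')$, so the phrase ``replace $(a',b')$ by its $\tau$-translate'' does not repair it. The correct choice is $(a'',b'')=(na'-b',\,a')$. This is again a positive imaginary root: positivity because every positive imaginary root $(a',b')$ satisfies $b'<na'$ (from $\tfrac{a'}{b'}+\tfrac{b'}{a'}<n$), and $q((na'-b',a'))=a'^2+b'^2-na'b'=q((a',b'))<0$ by direct expansion. Hence Theorem~\ref{bricks} produces a quasi-simple $N$ with $\udim N=(na'-b',a')$, and the even case of Lemma~\ref{dimensionvector} gives
\[
\udim N_r \;=\; A_r\bigl(a',\, na'-(na'-b')\bigr)\;=\;A_r(a',b'),
\]
as required. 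With this single correction your write-up is complete and coincides with the paper's intended argument.
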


\section{Indecomposable modules with the same length in a regular component}

In a regular component of the AR quiver of a wild quiver, the indecomposable modules 
are uniquely determined by their dimension vectors \cite{Z}.  It follows that
given a natural number $d$, there are, in a regular component, only finitely many indecomposable modules with length $d$.  Let $\mathcal{K}_n$ be a wild Kronecker quiver with $n\geq 3$ arrows.
 Let $\beta_{\mathcal{K}_n,\mathcal{C}}(d)$ denote the number of the indecomposable modules in a regular component $\mathcal{C}$ of the AR-quiver with length $d$ and $\beta_{\mathcal{K}_n}=\sup_{\mathcal{C},d}\,\beta_{\mathcal{K}_n, \mathcal{C}}(d)$.  We will show in this section $\beta_{\mathcal{K}_n}\leq 2$ and study the dimension vectors when the equality holds.

\subsection{}Let $\mathcal{K}_n$ be a wild Kronecker quiver with $n\geq 3$.
For each $i\geq 0$, let $B_{2i}=A_i$ and $B_{2i+1}=B_{2i+2}-B_{2i}$.  Thus
$$B_0=0,\quad B_1=1, \quad B_2=1,\quad B_3=n-1,\quad B_4=n,\quad B_5=n^2-n-1,\quad B_6=n^2-1,\ldots$$
It is easily seen  that $B_i<B_{i+1}$ and
$B_{2i-1}+(n-2)B_{2i}=B_{2i+1}$.
If $n=3$, then $B_{2i+1}$ is exactly the $2i+1$-th Fibonacci number $F_{2i+1}$.
For each $i\geq 0$, $(B_{2i}, B_{2i+2})=(A_i,A_{i+1})$ is a  positive real roots.
By applying the Coxeter transformation $\Phi$ and Lemma \ref{dimensionvector}, we can show
that $(B_{2i-1},B_{2i+1})$ is an imaginary root for every $i\geq 1$. The dimension vectors of 
the indecomposable modules in a regular components containing this kind of imaginary roots can be easily understood. 

\begin{prop}
Let $M$ be an indecomposable module in a regular component.
Suppose that $\ql(M)=r+1\geq 2$ and the quasi-top of $M$ is $X$, i.e. $M=[r+1]X$.
\begin{enumerate}
   \item $\udim M=(m,m)$ if and only if $\udim X=b(B_{2r-1},B_{2r+1})$ and $m=bA_{r+1}$.
   \item  $\udim M=(m(n-1),m)$ if and only if  $\udim X=b(B_{2r-3}, B_{2r-1})$ ($(b,b)$
              if $r=1$) and $m=bA_{r+1}$.
\end{enumerate}
\end{prop}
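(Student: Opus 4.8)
The plan is to use the explicit formula for dimension vectors in a regular component in terms of the quasi-top and the $\tau$-orbit structure, combined with the identities for the $A_i$'s and $B_i$'s. Recall from the discussion before Lemma~\ref{dimensionvector} that if $M = [r+1]X$, then $\udim M = \sum_{i=0}^{r}\udim\tau^i X$. So the strategy for part (1) is: first compute $\sum_{i=0}^{r}\udim\tau^i X$ when $\udim X = b(B_{2r-1}, B_{2r+1})$, using the formula $\udim\tau^i X = (A_{2i+1}a - A_{2i}b',\, A_{2i}a - A_{2i-1}b')$ from the second unnumbered lemma in Section~2 (with $(a,b') = b(B_{2r-1},B_{2r+1})$), and check that the two coordinates of the telescoping sum are equal and both equal $bA_{r+1}$. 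This reduces to a pair of summation identities among the $A_i$'s; the key input will be identity (5) of Lemma~\ref{equa}, $A_iA_{j+k} - A_jA_{i+k} = A_{i-j}A_k$, together with the relation $B_{2i-1} = A_i - A_{i-1}$ (which follows from $B_{2i} = A_i$ and $B_{2i+1} = B_{2i+2} - B_{2i}$, so $B_{2i-1} = B_{2i} - B_{2i-2} = A_i - A_{i-1}$). For the converse direction in (1): given $\udim M = (m,m)$ with $\ql(M) = r+1$, Theorem~\ref{ql} (or Lemma~\ref{dimensionvector} applied to the quasi-top) forces $A_{r+1} \mid m$, say $m = bA_{r+1}$; then one shows $\udim X$ is determined by $\udim M$ and the orbit (the quasi-top is recovered by repeatedly subtracting off $\tau^i X$), and matching with the forward computation pins down $\udim X = b(B_{2r-1}, B_{2r+1})$. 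It may be cleaner simply to note that the map $X \mapsto [r+1]X$ is injective on dimension vectors within a fixed quasi-length, so the "if" direction plus this injectivity gives "only if" for free.

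For part (2), the same scheme applies with $\udim X = b(B_{2r-3}, B_{2r-1})$ (and the degenerate case $\udim X = (b,b)$ when $r=1$, which is consistent since $B_{-1}$ should be read as $1$, matching $B_1 = B_2 - B_0 = 1$ — I will state the $r=1$ case separately to avoid index gymnastics). Here the target is $\udim M = (m(n-1), m)$ with $m = bA_{r+1}$, so the telescoping sum must have first coordinate $(n-1)$ times the second. Again this unwinds to summation identities in the $A_i$'s; I expect to use $A_{r+1} = n s_r$ type relations or directly the recursion $A_{i+2} = nA_{i+1} - A_i$ to collapse the sum, and the relation $B_{2i-3} = A_{i-1} - A_{i-2}$. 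One useful sanity check at $r=1$: then $M = [2]X$ with $\udim X = (b,b)$, so $\udim M = \udim X + \udim\tau X = (b,b) + (b,b)\Phi = (b,b) + (b(n^2-1) - bn,\, bn - b) = (b(n^2 - n - 1),\, b(n-1))$; and $A_2 = n$, so $m = bA_2 = bn$ would require $\udim M = (bn(n-1), bn)$ — which does \emph{not} match, so in fact the $r=1$ normalization in the statement must have $m = bA_2 = bn$ paired with $\udim X = (b,b)$ giving $(m(n-1),m)$ only if... let me recompute: we need $\udim M$ of the form $(m(n-1),m)$. From $\udim M = (b(n^2-n-1), b(n-1))$ we get second coordinate $b(n-1)$, first coordinate $b(n^2-n-1) = b(n-1)(n) - b \ne b(n-1)\cdot(\text{second})/1$... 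Actually $(n-1)\cdot b(n-1) = b(n-1)^2 = b(n^2 - 2n + 1) \ne b(n^2 - n - 1)$ in general. So the $r=1$ subcase needs careful reconciliation — possibly the intended reading is that $\udim M$ need not literally be computed from $[2]X$ but from a shifted orbit, or that there is an index convention $B_{-1} = 1, B_{-3} = ?$ making $b(B_{-3}, B_{-1}) = (b', b)$ for suitable $b'$. I will resolve this by directly checking which quasi-simple $X$ yields $\udim [2]X = (m(n-1), m)$: solving $\udim X + \udim X \Phi = (m(n-1), m)$ for $\udim X$ using $\Phi^{-1}$-type manipulation; this is a $2\times 2$ linear solve and will give the correct base case.

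The main obstacle I anticipate is purely bookkeeping: verifying the two telescoping-sum identities
\[
\sum_{i=0}^{r} (A_{2i+1}B_{2r-1} - A_{2i}B_{2r+1}) = A_{r+1}, \qquad \sum_{i=0}^{r}(A_{2i}B_{2r-1} - A_{2i-1}B_{2r+1}) = A_{r+1}
\]
(up to the factor $b$, and with the analogous pair for part (2)), and confirming their equality to each other. I expect identity (5) of Lemma~\ref{equa} to make each sum collapse, since each summand $A_{2i+1}B_{2r-1} - A_{2i}B_{2r+1}$ can be rewritten via $B_{2r\pm1} = A_r - A_{r-1}$ or $A_r + $ lower terms into differences of the form $A_pA_q - A_{p'}A_{q'}$ that telescope. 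There is also the annoyance of separating parity cases and small-$r$ degeneracies (the $(b,b)$ exception in (2), and making sure $B_{2r-3}$ makes sense for $r \geq 2$ only). None of this is conceptually hard, but I would organize it by first proving a clean lemma: \emph{for a quasi-simple $X$ with $\udim X = (a,b')$, the first and second coordinates of $\udim[r+1]X$ are $\sum_{i=0}^r A_{2i+1}a - \sum_{i=0}^r A_{2i}b'$ and $\sum_{i=0}^r A_{2i}a - \sum_{i=0}^r A_{2i-1}b'$ respectively}, then simplify the partial sums $\sum A_{2i+1}$, $\sum A_{2i}$ once and for all, and finally substitute the claimed values of $(a,b')$.
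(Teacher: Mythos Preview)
Your approach is exactly what the paper intends: it does not give a proof at all beyond ``The proof is somehow straightforward'' and a reference to \cite{C} for $n=3$, so the direct computation of $\udim [r+1]X=\sum_{i=0}^{r}\udim\tau^iX$ together with the $A_i$-identities is precisely the expected argument.

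There is, however, an arithmetic slip in your $r=1$ sanity check for part~(2) that is causing you unnecessary worry. You correctly write
\[
\udim M=\udim X+\udim\tau X=(b,b)+(b,b)\Phi=(b,b)+\bigl(b(n^{2}-n-1),\,b(n-1)\bigr),
\]
but then you drop the $(b,b)$ and record only the second summand. Adding the two vectors gives
\[
\udim M=\bigl(b(n^{2}-n),\,bn\bigr)=\bigl(bn(n-1),\,bn\bigr)=\bigl(m(n-1),\,m\bigr)
\]
with $m=bn=bA_{2}=bA_{r+1}$, which matches the statement perfectly. So there is no anomaly at $r=1$ and no need for special index conventions on $B_{-1}$ or $B_{-3}$; the parenthetical ``$(b,b)$ if $r=1$'' in the proposition is simply the base case of the general pattern (formally, one could set $B_{-1}=1$, consistent with the recursion).

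With that correction, your plan is sound: compute the partial sums $\sum_{i=0}^{r}A_{2i+1}$, $\sum_{i=0}^{r}A_{2i}$, $\sum_{i=0}^{r}A_{2i-1}$ once (they satisfy closed forms via the recursion $A_{i+2}=nA_{i+1}-A_i$), substitute $(a,b')=b(B_{2r-1},B_{2r+1})$ or $b(B_{2r-3},B_{2r-1})$, and simplify using $B_{2k-1}=A_k-A_{k-1}$ together with Lemma~\ref{equa}(5). For the converse, your observation that $X\mapsto[r+1]X$ is injective on dimension vectors at fixed quasi-length (since $\Phi$ is invertible and the sum $\sum_{i=0}^{r}\Phi^{i}$ is as well) immediately gives ``only if'' from ``if''.
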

The proof is somehow straightforward.   We also refer to \cite{C} for the $n=3$ case.

\subsection{ Some  (in)equalities} To study the indecomposable modules with the same length in a regular component we need the following (in)equalities.

The following well-known formula of Fibonacci numbers holds
for arbitrary $r$ and $s$:
$F_{r+s}=F_rF_{s+1}+F_{r-1}F_s$.
Similarly, we have 
\begin{lemm}
For each $s, t \geq 1$, $B_{2s+2t}=B_{2s}B_{2t+1}+B_{2s-1}B_{2t}$.
\end{lemm}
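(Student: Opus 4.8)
The statement $B_{2s+2t}=B_{2s}B_{2t+1}+B_{2s-1}B_{2t}$ is an additive (convolution-type) identity for the $B$-sequence, so the natural strategy is to translate it into the $A$-sequence, where we already have the strong addition formula $A_iA_{j+k}-A_jA_{i+k}=A_{i-j}A_k$ from Lemma \ref{equa}(5), and the three-term recursion $A_{i+2}=nA_{i+1}-A_i$. First I would unwind the definitions: $B_{2s}=A_s$, $B_{2t}=A_t$, $B_{2s+2t}=A_{s+t}$, while the odd-index terms satisfy $B_{2t+1}=B_{2t+2}-B_{2t}=A_{t+1}-A_t$ and $B_{2s-1}=B_{2s}-B_{2s-2}=A_s-A_{s-1}$. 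Substituting, the claim becomes the purely $A$-sequence identity
\begin{displaymath}
A_{s+t}=A_s(A_{t+1}-A_t)+(A_s-A_{s-1})A_t=A_sA_{t+1}-A_{s-1}A_t.
\end{displaymath}

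So the whole lemma reduces to proving $A_{s+t}=A_sA_{t+1}-A_{s-1}A_t$ for all $s,t\geq 1$. The cleanest route is to invoke Lemma \ref{equa}(5) with a well-chosen substitution. Taking $i=s$, $j=0$ (using $A_0=0$), and $k=t$ in the identity $A_iA_{j+k}-A_jA_{i+k}=A_{i-j}A_k$ gives only the trivial $A_sA_t=A_sA_t$, so that direct choice is too weak; instead I would apply it with indices shifted so the left side produces $A_{s+t}$. Concretely, set $i=s+1$, $j=1$, $k=t$: then $A_{s+1}A_{t+1}-A_1A_{s+t+1}=A_sA_t$, i.e. $A_{s+t+1}=A_{s+1}A_{t+1}-A_sA_t$ since $A_1=1$. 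Re-indexing $s\mapsto s-1$ yields exactly $A_{s+t}=A_sA_{t+1}-A_{s-1}A_t$, which is what we need (this is valid for $s\geq 1$, and for $s=1$ it reads $A_{t+1}=A_1A_{t+1}-A_0A_t$, which holds). Alternatively, if one prefers a self-contained argument, fix $t$ and induct on $s$: the base cases $s=1,2$ are immediate from the explicit small values and $A_3=n^2-1$, $A_2=n$, $A_1=1$, and the inductive step uses $A_{s+1+t}=nA_{s+t}-A_{s-1+t}$ together with $A_{s+1}=nA_s-A_{s-1}$ to match coefficients of $A_{t+1}$ and $A_t$.

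The only mildly delicate point is bookkeeping with the boundary index $s=1$ and the convention $A_0=0$, $B_0=0$, $B_1=1$, so that the translation $B_{2s-1}=A_s-A_{s-1}$ still makes sense and the invocation of Lemma \ref{equa}(5) stays inside its stated range $i\geq j$, $k\geq 0$. I would therefore state explicitly at the start that all the $B$-to-$A$ substitutions are instances of the definitions $B_{2i}=A_i$, $B_{2i+1}=B_{2i+2}-B_{2i}$, handle $s=1$ (and $t=1$) as a remark, and then the main line is a one-line consequence of Lemma \ref{equa}(5). I do not expect any real obstacle here — the content is entirely the reduction to the $A$-sequence addition formula, which is already available; the work is purely the algebraic substitution and an index shift.
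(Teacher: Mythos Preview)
Your reduction is correct: translating via $B_{2i}=A_i$ and $B_{2i+1}=A_{i+1}-A_i$ collapses the claim to $A_{s+t}=A_sA_{t+1}-A_{s-1}A_t$, and this is indeed the instance $i=s$, $j=1$, $k=t$ of Lemma~\ref{equa}(5) (using $A_1=1$), so the argument goes through with no further work beyond the boundary check $A_0=0$ for $s=1$.

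The paper itself gives no proof of this lemma --- it is stated and immediately followed by a remark --- so there is nothing to compare against; your derivation via Lemma~\ref{equa}(5) is the natural way to fill the gap, and the alternative induction on $s$ you sketch would also work and is equally routine.
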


\begin{remark}Note that  if $n\geq 4$,  the formula in this lemma
holds only for $B_{r+s}$, where $r+s$ is an even number. For example, let $r=3,s=4$,
then $B_7=A_4-A_3=n^3-n^2-2n+1$ and $B_3B_5+B_2B_4=(n-1)(n^2-n-1)+n=n^3-2n^2+n+1$.  They are
not equivalent since $n\geq 4$.
\end{remark}

\begin{lemm}\label{inequal0}
  For every $2\leq t<r$, $n=\frac{A_2}{A_1}> \frac{A_{t+1}}{A_t}>\frac{A_{r+1}}{A_r}>n-1$.
\end{lemm}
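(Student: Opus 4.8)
For every $2\le t<r$, $n=\frac{A_2}{A_1}>\frac{A_{t+1}}{A_t}>\frac{A_{r+1}}{A_r}>n-1$.

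The plan is to show that the sequence of ratios $c_i:=A_{i+1}/A_i$ is strictly decreasing for $i\ge 1$ and bounded below by $n-1$, and then read off the displayed chain of inequalities as the instance $1,t,r$ of a strictly decreasing sequence together with the lower bound. All $A_i$ are positive for $i\ge 1$ (immediate from $A_1=1$, $A_2=n\ge 3$ and the recursion $A_{i+2}=nA_{i+1}-A_i$, once one knows $A_{i+1}>A_i$), so dividing inequalities by $A_i$ is harmless.

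First I would establish monotonicity of the $A_i$ and then of the ratios. From Lemma \ref{equa}(1), $A_{i+2}-A_{i+1}=(n-1)A_{i+1}-A_i$; since $n-1\ge 2$, an easy induction using $A_2>A_1>0$ gives $A_{i+1}>A_i>0$ for all $i\ge 1$, and in fact $A_{i+2}-A_{i+1}>A_{i+1}-A_i$. For the ratios, compute
\[
c_{i+1}-c_i=\frac{A_{i+2}}{A_{i+1}}-\frac{A_{i+1}}{A_i}=\frac{A_iA_{i+2}-A_{i+1}^2}{A_iA_{i+1}}=\frac{-1}{A_iA_{i+1}}<0,
\]
where the numerator is evaluated by Lemma \ref{equa}(3). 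Hence $c_1>c_2>c_3>\cdots$, which yields $c_1>c_t>c_r$ for $2\le t<r$, i.e. the first three displayed inequalities (noting $c_1=A_2/A_1=n$).

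It remains to prove the lower bound $c_i>n-1$ for all $i\ge 2$, equivalently $A_{i+1}>(n-1)A_i$, i.e. $A_{i+1}-A_i>(n-2)A_i$. Using $A_{i+1}=nA_i-A_{i-1}$ this is $(n-1)A_i-A_{i-1}>(n-2)A_i$, i.e. $A_i>A_{i-1}$, which holds for $i\ge 2$ by the monotonicity established above (and for $i=2$: $A_2=n>1=A_1$). Since $\{c_i\}$ is strictly decreasing and every term exceeds $n-1$, in particular $c_r>n-1$, completing the chain. I do not foresee a genuine obstacle here; the only point requiring a little care is arranging the inductions so that positivity of the $A_i$ and the strict monotonicity are in hand before dividing, and invoking Lemma \ref{equa}(3) rather than recomputing $A_iA_{i+2}-A_{i+1}^2$ by hand.
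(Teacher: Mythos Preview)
Your proof is correct and follows essentially the same route as the paper: both arguments reduce the strict decrease of the ratios $A_{i+1}/A_i$ to the identity $A_{i+1}^2-A_iA_{i+2}=1$ (you invoke Lemma~\ref{equa}(3) directly, whereas the paper rederives it from Lemma~\ref{equa}(2) together with the recursion). You are in fact more complete than the paper's own proof, since you also supply the lower bound $A_{i+1}/A_i>n-1$ and the positivity and monotonicity of the $A_i$, which the paper leaves implicit.
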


\begin{proof}
  It is sufficient to show that $\frac{A_{r+1}}{A_r}>\frac{A_{r+2}}{A_{r+1}}$
  for every $r\geq 1$.  Since
  $A_{r+1}^2-nA_{r+1}A_{r+2}+A_{r+2}^2=1$ and $A_{r+2}=nA_{r+1}-A_r$,
  we have $A_{r+  1}^2-nA_{r+1}A_{r+2}+(nA_{r+1}-A_r)A_{r+2}>0$.
  It follows that $A_{r+1}^2>A_{r}A_{r+2}$.
\end{proof}

Using the above two lemmas, we can show the following inequality.
\begin{lemm} \label{inequa1}

Let $j,k,s,t>0$. Then $\frac{A_{j+k}}{A_j}>\frac{A_{s+t}}{A_s}$ if $k>t$, or $k=t$ and $j<s$. 
In particular,  $\frac{A_i}{A_j}=\frac{A_r}{A_s}$ if and only if $i=r$ and $j=s$.
\end{lemm}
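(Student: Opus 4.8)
The plan is to reduce the statement to the single inequality $\frac{A_{j+k}}{A_j}>\frac{A_{s+t}}{A_s}$ whenever $k>t$ (the case $k=t$, $j<s$ will follow from the strict monotonicity already isolated in Lemma~\ref{inequal0}, and the ``in particular'' clause is then immediate by trichotomy). First I would dispose of the easy sub-case $k=t$: here we must show $\frac{A_{j+k}}{A_j}>\frac{A_{s+k}}{A_s}$ for $j<s$, i.e. $A_{j+k}A_s>A_{s+k}A_j$, and this is exactly $A_{j+k}A_s-A_{s+k}A_j=A_{s-j}A_k>0$ by identity (5) of Lemma~\ref{equa} (note $s\geq j$ so $A_{s-j}\geq A_1=1>0$, and $A_k>0$). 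So the monotonicity-in-the-lower-index part is essentially free once one invokes the determinant-type identity.

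The substantive part is $k>t$. Here I would first normalise: by the $k=t$ case just proved it suffices to treat the extremal situation, but more cleanly I would argue directly. Fix $t$ and note that by the $k=t$ comparison we may assume $j$ is as large as we like; in fact the cleanest route is to prove the two chained inequalities
\[
\frac{A_{j+k}}{A_j}\;\geq\;\frac{A_{j+t+1}}{A_j}\;>\;\frac{A_{s+t}}{A_s},
\]
where the first holds because $A_{j+k}\geq A_{j+t+1}$ (the sequence $\{A_i\}$ is increasing and $k\geq t+1$), and the second is the genuinely new inequality. So everything comes down to: for all $j,s,t>0$,
\[
\frac{A_{j+t+1}}{A_j}>\frac{A_{s+t}}{A_s},\qquad\text{equivalently}\qquad A_{j+t+1}A_s>A_{s+t}A_j.
\]
By identity (5) again, $A_{j+t+1}A_s-A_{s+t}A_j$ relates to $A_{|j-s|}A_{\min}$ type terms only when the ``shifts'' on the two sides agree, which they do not here ($t+1$ versus $t$), so I cannot finish with a single identity; instead I would use Lemma~\ref{inequal0}, which gives $\frac{A_{i+1}}{A_i}\in(n-1,n]$ for all $i$ and, crucially, is strictly decreasing in $i$. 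Writing $\frac{A_{j+t+1}}{A_j}=\prod_{i=j}^{j+t}\frac{A_{i+1}}{A_i}$ and $\frac{A_{s+t}}{A_s}=\prod_{i=s}^{s+t-1}\frac{A_{i+1}}{A_i}$, the first product has $t+1$ factors and the second has $t$ factors; since every factor is $>n-1\geq 2$, the extra factor in the first product more than compensates, provided one controls the ratio of the two products of length $t$. Concretely, $\frac{A_{j+t+1}}{A_j}\big/\frac{A_{s+t}}{A_s}=\frac{A_{j+t+1}}{A_{s+t}}\cdot\frac{A_s}{A_j}$, and I would bound this below by comparing telescoped ratios: if $j\leq s$ then $\frac{A_s}{A_j}\geq 1$ and $\frac{A_{j+t+1}}{A_{s+t}}$ is a ratio of $A$'s whose indices differ by $j+t+1-(s+t)=j-s+1$; when $j\leq s$ this could be $\leq 1$, so the cleaner argument keeps the products aligned and pairs the $i$-th factor of one product with the $i$-th factor of the other, using strict monotonicity of $A_{i+1}/A_i$ in $i$ to control each pair, and then the single leftover factor $A_{\cdot+1}/A_{\cdot}>n-1>1$ closes the gap.

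I expect the main obstacle to be exactly this bookkeeping in the $k>t$ case: making the ``one extra factor, each at least $n-1$'' heuristic into a rigorous chain of inequalities without case-splitting on the sign of $j-s$. The way I would make it clean is to prove the single key lemma ``$A_{m+1}A_\ell>A_{\ell+1}A_m$ whenever $m<\ell$'' — which is again identity (5) in disguise, $A_{m+1}A_\ell-A_{\ell+1}A_m=$ (sign-checked) $A_{\ell-m}A_1\cdot(\text{something positive})$, or more directly from $A_{r+1}^2>A_rA_{r+2}$ (Lemma~\ref{inequal0}) telescoped — and then deduce $\frac{A_{a+1}}{A_a}>\frac{A_{b+1}}{A_b}$ for $a<b$, hence by multiplying $t$ such inequalities that $\frac{A_{j+t}}{A_j}\geq\frac{A_{s+t}}{A_s}$ for $j\leq s$ with equality iff $j=s$; finally multiply by the surplus factor $\frac{A_{j+t+1}}{A_{j+t}}>n-1\geq 2>1$ to upgrade $k=t$ to $k>t$. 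Once the key lemma is in place the rest is a two-line telescoping argument, and the ``in particular'' statement is the contrapositive: if $\frac{A_i}{A_j}=\frac{A_r}{A_s}$ then neither $i>r$ nor $i<r$ can hold after matching denominators via the first part, forcing $i=r$ and then $j=s$.
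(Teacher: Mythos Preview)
Your treatment of the equal-shift case $k=t$, $j<s$ via identity~(5) of Lemma~\ref{equa} is correct and in fact tidier than the paper's own argument for that sub-case, which instead telescopes $\frac{A_{j+k}}{A_j}$ into a product of $k$ consecutive ratios $\frac{A_{i+1}}{A_i}$ and compares factor by factor using Lemma~\ref{inequal0}.

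The gap is in the case $k>t$ with $j>s$. Your final ``clean'' route establishes $\frac{A_{j+t}}{A_j}\geq\frac{A_{s+t}}{A_s}$ only under the hypothesis $j\leq s$, and then multiplies by the surplus factor $\frac{A_{j+t+1}}{A_{j+t}}>n-1\geq 2$. When $j>s$ that preliminary inequality reverses, and you would need to show that the surplus factor dominates the deficit $\bigl(\tfrac{A_{s+t}}{A_s}\bigr)\big/\bigl(\tfrac{A_{j+t}}{A_j}\bigr)$. This is in fact true---the deficit is bounded above by $\lambda/\sqrt{n^2-4}<2$, where $\lambda=\tfrac{n+\sqrt{n^2-4}}{2}$---but you have not argued it, and it is not a one-line consequence of the tools you have set up. You correctly anticipated that avoiding a case-split on the sign of $j-s$ would be the obstacle; the proposed fix does not actually avoid it.

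The paper sidesteps this entirely by passing to the interleaved sequence $B_m$ (with $B_{2i}=A_i$, $B_{2i+1}=A_{i+1}-A_i$) and the addition formula $A_{j+k}=B_{2j}B_{2k+1}+B_{2j-1}B_{2k}$, giving $\frac{A_{j+k}}{A_j}=B_{2k+1}+\frac{B_{2j-1}}{B_{2j}}B_{2k}$. Since $0<\frac{B_{2j-1}}{B_{2j}}<1$, one obtains the uniform two-sided estimate $B_{2k+1}<\frac{A_{j+k}}{A_j}<B_{2k+2}$, independent of $j$; then $k>t$ forces $\frac{A_{s+t}}{A_s}<B_{2t+2}<B_{2k+1}<\frac{A_{j+k}}{A_j}$ for all $j,s$ simultaneously. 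Your approach trades this extra structure for a more elementary toolkit; the cost is exactly the $j>s$ case that the uniform bound absorbs for free.
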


\begin{proof}
Let $i=j+k$ and $r=s+t$.
Then $$A_{j+k}=B_{2j+2k}=B_{2j}B_{2k+1}+B_{2j-1}B_{2k},$$
$$A_{s+t}=B_{2s+2t}=B_{2s}B_{2t+1}+B_{2s-1}B_{2t}.$$
Thus
$$\frac{A_{j+k}}{A_j}=\frac{B_{2j+2k}}{B_{2j}}=B_{2k+1}+\frac{B_{2j-1}}{B_{2j}}B_{2k},$$
$$\frac{A_{s+t}}{A_s}=\frac{B_{2s+2t}}{B_{2s}}=B_{2t+1}+\frac{B_{2s-1}}{B_{2s}}B_{2t}.$$
If  $k>t$, then $$\frac{A_{s+t}}{A_s}<B_{2t+1}+B_{2t}=B_{2t+2}<B_{2k+1}<\frac{A_{j+k}}{A_j}.$$
Suppose that $k=t$ and $j<s$. Then $\frac{A_{j+k}}{A_j}$ is the product
$\frac{A_{j+k}}{A_{j+k-1}}\frac{A_{j+k-1}}{A_{j+k-2}}\cdots\frac{A_{j+1}}{A_j}$ with $k=t$ factors
and so is $\frac{A_{s+t}}{A_s}$. Thus $\frac{A_{j+k}}{A_j}>\frac{A_{s+t}}{s}$ by previous lemma. 
\end{proof}

\begin{lemm}\label{inequa2}
For any $i,j,r,s,t\geq 1$,  $\frac{A_{i+j}}{A_s}\neq\frac{A_i}{A_t}+\frac{A_j}{A_r}$.
\end{lemm}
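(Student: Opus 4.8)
The plan is to argue by contradiction and reduce to a statement about the auxiliary $B$-numbers. Suppose $\frac{A_{i+j}}{A_s}=\frac{A_i}{A_t}+\frac{A_j}{A_r}$ for some $i,j,r,s,t\geq 1$. Clearing denominators gives $A_{i+j}A_tA_r = A_sA_iA_r + A_sA_jA_t$, an integer identity we want to rule out. The first step is to get a grip on the sizes of the quantities involved using Lemma \ref{inequal0} and Lemma \ref{inequa1}: each ratio $\frac{A_m}{A_\ell}$ is squeezed between powers of the numbers $B_{2k+1}$, and in particular $\frac{A_{i+j}}{A_s}$ is comparable to $B_{2(i+j)-2s+1}$-ish quantities while $\frac{A_i}{A_t}$ and $\frac{A_j}{A_r}$ are each at least $n-1>1$. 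So the hypothesis already forces $i+j>s$ (otherwise the left side is $<n$ while the right side is $\geq 2(n-1)\geq n$ for $n\geq 4$, and the small cases $n=3$ can be checked directly), and more precisely it pins down the relation between $i+j$, $s$, $t$, $r$ fairly tightly.

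Next I would convert everything to the $B$-expansion $A_m = B_{2m} = B_{2j}B_{2k+1}+B_{2j-1}B_{2k}$ (the lemma preceding Lemma \ref{inequal0}) applied with a judicious choice of split. Writing $A_{i+j}=A_iA_?\cdots$ is not available as a clean product, but the additive $B$-identity is, and the point is that $B_{2m-1}/B_{2m}$ lies strictly between $\frac{1}{n-1}$ and $\frac{1}{n}$ (this follows from $B_{2m+1}=B_{2m-1}+(n-2)B_{2m}$ together with monotonicity). So each ratio $\frac{A_{i+j}}{A_s}$ equals $B_{2(i+j)-2s+1}$ plus a small correction term lying in a controlled interval. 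The strategy is then: reduce the putative equation modulo the "integer parts" $B_{\mathrm{odd}}$, and show the leftover fractional/error terms cannot balance. Concretely, $\frac{A_{i+j}}{A_s}-\frac{A_i}{A_t}-\frac{A_j}{A_r}$ has integer part forced to be $0$, but then the fractional parts must satisfy $\{\frac{A_{i+j}}{A_s}\}=\{\frac{A_i}{A_t}\}+\{\frac{A_j}{A_r}\}$ possibly with a carry, and since each fractional part is of the form $\frac{B_{2\cdot-1}}{B_{2\cdot}}B_{2\cdot}/B_{2s}$ — a rational with a specific denominator — one can compare denominators. The cleanest route is probably to multiply through by $A_sA_tA_r$ and reduce modulo $A_s$: the equation forces $A_s \mid A_tA_r\,A_{i+j}$, and using Lemma \ref{equa}(5) (the $A_iA_{j+k}-A_jA_{i+k}=A_{i-j}A_k$ identity, which controls $\gcd(A_m,A_\ell)$) one shows $A_s$ must divide a product of smaller $A$'s in a way that contradicts the size estimates from Step 1.

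I expect the main obstacle to be organizing the case analysis cleanly: the inequality $\frac{A_{i+j}}{A_s}$ versus $\frac{A_i}{A_t}+\frac{A_j}{A_r}$ naturally splits according to whether $i+j-s$ is larger than, equal to, or comparable with $\max(i-t, j-r)$, and one has to handle the borderline cases where the "integer parts" $B_{\mathrm{odd}}$ coincide and the whole discrepancy lives in the error terms. In those borderline cases one cannot win by crude size bounds and genuinely needs the sharper two-sided estimate $\frac{1}{n}<\frac{B_{2m-1}}{B_{2m}}<\frac{1}{n-1}$ — together, perhaps, with the observation that the sequence $\frac{B_{2m-1}}{B_{2m}}$ is strictly monotone in $m$ (so that two error terms with different indices genuinely differ). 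The other slightly delicate point is the base of the induction / the smallest cases ($s=t=r=1$, or $i=j=1$), which should be dispatched by a direct finite check: $\frac{A_{i+j}}{A_1}=A_{i+j}$ is an integer, while $\frac{A_i}{A_1}+\frac{A_j}{A_1}=A_i+A_j$, and $A_{i+j}=A_i+A_j$ never holds for $i,j\geq 1$, $n\geq 3$ since $A_{i+j}\geq A_{i+1}\geq nA_i-A_{i-1}>A_i+A_j$ for the relevant ranges. Once the size bookkeeping is set up, the contradiction should fall out of a single application of the $\gcd$ identity Lemma \ref{equa}(5).
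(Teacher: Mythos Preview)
Your overall instinct---write $\frac{A_{m+\ell}}{A_m}=B_{2\ell+1}+\frac{B_{2m-1}}{B_{2m}}B_{2\ell}$ and compare ``integer parts'' $B_{\mathrm{odd}}$ against the smaller correction terms---is exactly what the paper does in the case $i+j>s$, and your remark that the borderline subcase needs the monotonicity of $B_{2m-1}/B_{2m}$ is on target (the paper is eventually forced down to $n=3$, $a=b+1=c+1$ there and finishes with that estimate).

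But there is a genuine gap earlier. You assert that the hypothesis ``already forces $i+j>s$'' because ``$\frac{A_i}{A_t}$ and $\frac{A_j}{A_r}$ are each at least $n-1>1$''. Nothing in the statement guarantees $i>t$ or $j>r$; both right-hand terms can perfectly well be $<1$. So the cases $i+j=s$ and $i+j<s$ are live and must be treated on their own. For $i+j=s$ the paper observes that then both right-hand terms are $<1$, hence each is $\leq \frac{A_{t-1}}{A_t}<\frac{1}{n-1}\leq\frac{1}{2}$, contradicting that they sum to $1$. For $i+j<s$ the paper writes $s=i+j+a$, $t=i+b$, $r=j+c$ with $a,b,c\geq 1$, uses Lemma~\ref{inequa1} to force $a\leq b$ and $a\leq c$, then rules out $a=b$ via the exact identity $A_{i+j}A_{i+a}-A_{i+j+a}A_i=A_jA_a$ from Lemma~\ref{equa}(5), and finally rules out $a<b,\,a<c$ by a convexity-type estimate $\frac{A_{i+j}}{A_{i+j+a}}<\frac{1}{2}\bigl(\frac{A_i}{A_{i+a}}+\frac{A_j}{A_{j+a}}\bigr)$. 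This is more than ``a single application of the gcd identity''; your sketch does not cover it, and the modular-arithmetic route you propose (reduce mod $A_s$ and chase divisibility) does not obviously produce the needed contradiction when all three ratios are $<1$.

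In short: keep the $B$-expansion argument for $i+j>s$, but drop the claim that this case is forced, and supply separate arguments for $i+j=s$ (easy) and $i+j<s$ (requires both Lemma~\ref{equa}(5) and Lemma~\ref{inequa1} in the manner above).
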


\begin{proof} Suppose for a contradiction that the equality holds. 

\underline{ Case   $i+j=s$.} Since  $\frac{A_{i+j}}{A_s}=1$, both $\frac{A_i}{A_t}$ and $\frac{A_j}{A_r}$ are smaller than $1$. 
In particular $i< t$, $j< r$ and $\frac{A_i}{A_t}\leq \frac{A_{t-1}}{A_t}<\frac{1}{2}$,  $\frac{A_j}{A_r}\leq\frac{A_{r-1}}{A_r}<\frac{1}{2}$.
This is a contradiction. 

\underline {Case  $i+j>s$.}  Let $i+j=s+a$ for some $a\geq 1$. Because $\frac{A_{i+j}}{A_s}>2$ we may assume without loss of generality that $\frac{A_i}{A_t}>1$ and $i=t+b$ for some 
$b\geq 1$.  If $a\leq b$, then $\frac{A_{i+j}}{A_s}=\frac{A_{i+j}}{A_{i+j-a}}<\frac{A_i}{A_{i-b}}=\frac{A_i}{A_t}$, which is impossible. 
Therefore, $a>b$. It follows that 
\begin{displaymath}
\begin{array}{rcl}
& & \frac{A_{i+j}}{A_s}-\frac{A_i}{A_t}=\frac{A_{s+a}}{A_s}-\frac{A_{t+b}}{A_t}\\
&=& B_{2a+1}+\frac{B_{2s-1}}{B_{2s}}B_{2a}-(B_{2b+1}+\frac{B_{2t-1}}{B_{2t}}B_{2b})\\
&>& B_{2a+1}-B_{2b+2}\left\{ \begin{array}{ll} =B_{2a+1}-B_{2a}=(n-3)B_{2a}+B_{2a-1}, & a=b+1;\\
                                               \geq B_{2a+1}-B_{2a-2}=(n-3)B_{2a}+2B_{2a-1} & a\geq b+2.\\\end{array}\right.
\end{array}
\end{displaymath}

Since $B_{2a-1}\geq 2$, it follows that  $\frac{A_{i+j}}{A_s}-\frac{A_i}{A_t}>1$. Then $j=r+c$ for some $c\geq 1$ and $\frac{A_{j}}{A_r}=B_{2c+1}+\frac{B_{2r-1}}{B_{2r}}B_{2c}\leq B_{2c+2}$.
Similarly,  $a>c$. 
On the other hand, because $B_{2c+2}\leq B_{2a}<2B_{2a-1}$ and $B_{2a-2}<B_{2a-1}$, $\frac{A_{i+j}}{A_s}-\frac{A_i}{A_t}=\frac{A_j}{A_r}$ implies
$n=3$ and  $a=b+1=c+1$, that is 
\[B_{2a+1}+\frac{B_{2s-1}}{B_{2s}}B_{2a}=B_{2a-1}+\frac{B_{2t-1}}{B_{2t}}B_{2a-2} + B_{2a-1}+\frac{B_{2r-1}}{B_{2r}}B_{2a-2}.\] 
It follows that 
\[B_{2a-2}+\frac{B_{2s-1}}{B_{2s}}B_{2a}=(\frac{B_{2t-1}}{B_{2t}}+\frac{B_{2s-1}}{B_{2s}})B_{2a-2}.\]
The left hand side is greater than $(2\frac{B_{2s-1}}{B_{2s}}+1)B_{2a-2}>2B_{2a-2}$. The right hand side is, however, smaller than $2B_{2a-2}$.  This is a contradiction. 

\underline{Case  $i+j<s$. }  Let $i+j+a=s$, $i+b=t$ and $j+c=r$ for some $a, b, c\geq 1$. It follows that $a\leq b$ and $a\leq c$. Otherwise, we would have (Lemma \ref{inequa1})
$\frac{A_{i+j}}{A_s}=\frac{A_{i+j}}{A_{i+j+a}}<\frac{A_i}{A_{i+b}}=\frac{A_i}{A_t}$, which is impossible.

If $a=b$, then \[\frac{A_j}{A_r}=\frac{A_{i+j}}{A_s}-\frac{A_i}{A_t}=\frac{A_{i+j}}{A_{i+j+a}}-\frac{A_i}{A_{i+b}}=
\frac{A_{i+j}A_{i+a}-A_{i+j+a}A_i}{A_{i+j+a}A_{i+a}}\stackrel{\textrm{Lemma \ref{equa}(5)}}{=}\frac{A_{j}A_{a}}{A_{i+j+a}A_{i+a}}.\]
This is impossible by Lemma \ref{inequal0}. Thus we have $a< b$ and similarly,  $a< c$.

If $a\leq b-1$, $a\leq c-1$, then \[\frac{A_{i+j}}{A_{i+j+a}}=\frac{A_i}{A_{i+b}}+\frac{A_j}{A_{j+c}}
                                    =\frac{A_i}{A_{i+a}}\frac{A_{i+a}}{A_{i+b}}+\frac{A_j}{A_{j+a}}\frac{A_{j+a}}{A_{j+c}}<\frac{1}{2}(\frac{A_i}{A_{i+a}}+\frac{A_j}{A_{j+a}})\]
Therefore, \[\frac{A_{i+j}}{A_{i+j+a}}-\frac{A_i}{A_{i+a}}< \frac{A_j}{A_{j+a}}-\frac{A_{i+j}}{A_{i+j+a}} < 0\]
This is again a contradiction.  

Thus   $\frac{A_{i+j}}{A_s}\neq\frac{A_i}{A_t}+\frac{A_j}{A_r}$ for any $i,j,r,s,t\geq 1$.
\end{proof}

\subsection{Upper bound} Let $\mathcal{K}_n$ a wild Kronecker quiver with $n\geq 3$ arrows. We can give an upper bound of the number of indecomposable modules with a given length in an arbitrary regular component. 

\begin{theo}\label{bound} 
Let  $\mathcal{K}_n$ be a wild Kronecker quiver with $n\geq 3$. Then  $\beta_{\mathcal{K}_n}\leq 2$,  that is, for any natural number $d$, there are, in any regular component $\mathcal{C}$ and,  at most two (up to isomorphism) indecomposable modules  with length $d$.
\end{theo}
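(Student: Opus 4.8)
The plan is to show that in a fixed regular component $\mathcal{C}$, the assignment sending an indecomposable module $M=[r]X$ of quasi-length $r$ and quasi-top $X$ to its total dimension $|M|=(\udim M)_1+(\udim M)_2$ is, up to two exceptions, injective. Since the component $\mathcal{C}$ is of type $A_\infty^\infty$ and its quasi-simple modules form a single $\tau$-orbit, we may fix one quasi-simple module $X$ in $\mathcal{C}$ and let $(a_i,b_i)=\udim\tau^i X$ run over the dimension vectors of all quasi-simple modules ($i\in\mathbb{Z}$); every indecomposable in $\mathcal{C}$ is $[r](\tau^{-j}X)$ for a unique $r\geq 1$ and $j\in\mathbb{Z}$, with dimension vector $\sum_{\ell=0}^{r-1}\udim\tau^{\ell-j}X$. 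The first step is therefore to write $|[r](\tau^{-j}X)|$ explicitly: using $\udim\tau^i X=(A_{2i+1}a-A_{2i}b,\,A_{2i}a-A_{2i-1}b)$ from the second lemma of Section 2 (applied to a suitable base quasi-simple with dimension vector $(a,b)$), the telescoping sum over a layer collapses — the coefficients $A_{2i+1}-A_{2i}$ and $A_{2i}-A_{2i-1}$ are of the form $B_m$ — so that $|[r](\tau^{-j}X)|$ becomes a clean bilinear expression in the $A$'s and $B$'s. I expect it to take the shape $|[r](\tau^{-j}X)| = A_r\cdot(\text{something in }A_{2j\pm 1})$ or, after factoring, $c\cdot A_{r}A_{\text{linear in }j}$ type expressions, matching the pattern already visible in the Proposition preceding Lemma \ref{inequal0}.

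The second step is the reduction to the inequalities of Section 4.2. Suppose three pairwise non-isomorphic indecomposables $M_1,M_2,M_3$ in $\mathcal{C}$ share a length $d$. Normalize by scaling: the quasi-simple modules in $\mathcal{C}$ have dimension vectors that are all integer multiples of a single primitive imaginary root, by Theorem \ref{ql} the common divisor structure is governed by the $A_r$'s, so $|[r_i]X_i| = g\cdot A_{r_i}\cdot(\text{positive integer depending on the layer/orbit position})$. Dividing the three equal lengths by $g$ and then by a common $A$-factor, the equalities $|M_1|=|M_2|=|M_3|$ turn into equalities among ratios $\frac{A_{p}}{A_{q}}$ — precisely the expressions controlled by Lemma \ref{inequa1} (which says $\frac{A_i}{A_j}=\frac{A_r}{A_s}$ forces $i=r$, $j=s$) and Lemma \ref{inequa2} (which forbids $\frac{A_{i+j}}{A_s}=\frac{A_i}{A_t}+\frac{A_j}{A_r}$). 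The idea is that a third coincidence always produces either a forbidden equality of two $A$-ratios (contradicting \ref{inequa1}, giving that the two modules were already isomorphic) or a forbidden additive relation (contradicting \ref{inequa2}).

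Concretely, I would organize the case analysis by the relative positions of $M_1,M_2,M_3$: whether their quasi-tops lie in the same Coxeter ($\tau$-) orbit or not, and comparing quasi-lengths. If two of them have quasi-tops in the same orbit, say $M_1=[r]X$ and $M_2=[r']\tau^{-m}X$, equality of lengths together with the telescoping formula gives a relation purely among $A$'s with denominators $A_r$, $A_{r'}$ — Lemma \ref{inequa1} pins down $r=r'$ and $m=0$, so $M_1\cong M_2$, a contradiction if they were assumed distinct. The genuinely new content is when all three quasi-tops lie in distinct orbits or when a third module forces an additive relation: there the length of the "longer" module, split as a sum over its layers, must equal a sum of two shorter lengths, and this is exactly the shape $\frac{A_{i+j}}{A_s}=\frac{A_i}{A_t}+\frac{A_j}{A_r}$ ruled out by Lemma \ref{inequa2}.

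The main obstacle, and where I would spend the most care, is the bookkeeping in step two: getting the telescoped length formula into a normal form to which Lemmas \ref{inequa1} and \ref{inequa2} apply verbatim, and making sure the case division (same orbit vs.\ different orbit; parity of the shift; which module is longest) is exhaustive without overlap. In particular one must check that after cancelling the gcd $g$ and the common $A_r$-factors, the remaining factors are still of the form $A_{\ell}$ with $\ell\geq 1$ (not degenerate), so that the hypotheses "$i,j,r,s,t\geq 1$" of Lemma \ref{inequa2} are met; the boundary layers $r=1$ (quasi-simple) and the possibility $j=0$ need to be handled as separate, easy sub-cases. Once the normal form is established, the contradiction in the three-module scenario is immediate from the two inequality lemmas, yielding $\beta_{\mathcal{K}_n,\mathcal{C}}(d)\leq 2$ for every $\mathcal{C}$ and $d$, hence $\beta_{\mathcal{K}_n}\leq 2$.
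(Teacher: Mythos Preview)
Your high-level target is right---three equal lengths must produce an identity of the shape forbidden by Lemma \ref{inequa2}---but the route you sketch has a genuine gap. The case split ``quasi-tops in the same Coxeter orbit or not'' is vacuous: in a regular component the quasi-simple modules form a \emph{single} $\tau$-orbit, so every quasi-top lies in that orbit. More seriously, your claim that if $M_1=[r]X$ and $M_2=[r']\tau^{-m}X$ have equal length then Lemma \ref{inequa1} forces $r=r'$ and $m=0$ is false: two non-isomorphic modules in $\mathcal{C}$ \emph{can} share a length (this is exactly what Theorem \ref{samelength} and Proposition \ref{sameorbit} exhibit, and why the bound is $2$ rather than $1$). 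A single length coincidence $|M|=|N|$ only pins down the ratio $b/a$ of the base quasi-simple in terms of the $A$'s and the shift; it does not by itself produce a ratio identity $\frac{A_i}{A_j}=\frac{A_r}{A_s}$.

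The paper's argument does not go through Lemma \ref{inequa1} at all. One fixes a base quasi-simple $X$ with $\udim X=(a,b)$, writes $M=X_t$, $N=(\tau^jX)_s$, $L=(\tau^iX)_r$, and uses Lemma \ref{dimensionvector} (whose output depends on the parity of the quasi-length---that is the actual case split). Each of the two equalities $|M|=|N|$ and $|M|=|L|$ is linear in $a,b$ and yields an expression for $b/a$; equating these and simplifying via Lemma \ref{equa}(5) gives an identity of the exact form $\frac{A_{p}}{A_t}+\frac{A_{q}}{A_r}=\frac{A_{p+q}}{A_s}$ (with $p,q$ determined by $i,j$ and the parities), contradicting Lemma \ref{inequa2}. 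There is no gcd to extract and no ``splitting the longer module into layers''; the additive shape arises purely from eliminating $b/a$ between the two length equations.
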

\begin{proof} Let $\mathcal{C}$ be a regular component and $M$ an indecomposable module with $M=X_t$, where $X$ is a quasi simple module  with dimensional vector $(a,b)$.
Let $N=(\tau^j X)_s$ and $L=(\tau^i X)_r$. Assume for a contradiction that $|M|=|N|=|L|$. \\
\underline{Case $r$ even and $s$, $t$  odd}. The dimensional vectors are 
\[\udim M = A_t(a,b), \udim N=A_s(A_{2j+1}a-A_{2j}b, A_{2j}a-A_{2j-1}b),\] \[\udim L= A_r(A_{2j}a-A_{2j-1}b), n(A_{2j}a-A_{2j-1}b)-(A_{2j+1}a-A_{2j}b).\]
The modules are of the same length implies that   
\begin{displaymath}
\begin{array}{rcl}
 A_t(a+b) & = & A_s((A_{2j+1}a-A_{2j}b)+(A_{2j}a-A_{2j-1}b))\\
          & = & A_r((A_{2i}a-A_{2i-1}b+n(A_{2i}a-A_{2i-1}b)-(A_{2i+1}a-A_{2i}b)).
\end{array} 
\end{displaymath}
It follows that
\[\frac{A_s(A_{2j+1}+A_{2j})-A_t}{A_s(A_{2j}+A_{2j-1})+A_t}=\frac{b}{a}
 =\frac{A_r(A_{21}+A_{2i-1})-A_t}{A_r(A_{2i-1}+A_{2i-2})+A_t}.\]
Then using the formula Lemma \ref{equa} (5) (for $k=1,2$) we obtain \[A_rA_sA_{2i-2j-1}+A_sA_tA_{2j}=A_rA_tA_{2i-1}\]
and thus \[\frac{A_{2i-2j-1}}{A_t}+\frac{A_{2j}}{A_r}=\frac{A_{2i-1}}{A_s}.\]

The equalities obtained from all possibilities ($r,s,t$ being even or odd numbers) are listed in the follow tabular.  

\begin{center}
\begin{tabular}{|c|c|c|}
\hline 
 Even & Odd & Equality $\frac{A_{2i-2j+x}}{A_t}+\frac{A_{2j+y}}{A_r}=\frac{A_{2i+z}}{A_s}$ with \\
 
 & & $x, y, z=$\\
\hline
 
  $r,s,t$ & &  $ \begin{array}{lll}0, & 0, & 0\end{array} $\\

& $r,s,t$  &  $ \begin{array}{lll}0, & 0, & 0\end{array} $\\

 $r$  & $s,t$ & $ \begin{array}{lll}-1, & 0, & -1\end{array}$\\

$s$ & $r,t$ & $\begin{array}{lll}1, & -1, & 0\end{array}$\\

$t$ & $r,s$ & $ \begin{array}{lll}0, & 1, & 1\end{array}$\\

$s,t$, & $r$ & $ \begin{array}{lll}1, & 0, & 1\end{array}$\\

$r, t$ & $s$ & $ \begin{array}{lll}-1, & 1, & 0\end{array}$\\

$r,s$ & $t$ & $ \begin{array}{lll}0, & -1, & -1\end{array}$ \\

\hline
\end{tabular}
\end{center}

However, the equalities could not hold by Lemma \ref{inequa2}.
The proof is completed. 
\end{proof}

\subsection{} Indecomposable modules in a regular component with the same length occur  everywhere in the following sense: 

\begin{theo}\label{samelength}Let  $\mathcal{K}_{n}$ be a wild Kronecker quiver with $n\geq 3$ arrows.
Then for each pair of natural numbers $r,s$, there are always
regular components $\mathcal{C}$ containing indecomposable
modules $M$ and $N$ with quasi-lengths $r,s$, respectively
such that $|M|=|N|$.
\end{theo}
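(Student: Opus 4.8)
The plan is to construct, for any prescribed pair $(r,s)$, a regular component containing modules of quasi-lengths $r$ and $s$ with equal length, by prescribing the dimension vector of a quasi-simple module and then using Lemma~\ref{dimensionvector} together with Theorem~\ref{bricks} to populate the component. Concretely, suppose $X$ is quasi-simple with $\udim X=(a,b)$ and we set $M=X_r$, $N=(\tau^{-j}X)_s$ for a suitable shift $j\geq 0$. By Lemma~\ref{dimensionvector} (applied both to $X$ and to a quasi-simple translate) the lengths $|M|$ and $|N|$ are linear expressions in $a$ and $b$ whose coefficients are among the $A_i$ and $B_i$; setting $|M|=|N|$ becomes a single linear Diophantine condition on $(a,b)$. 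The idea is to solve this condition for $(a,b)$ with $a/b$ lying strictly inside the imaginary cone $\bigl(\tfrac{n-\sqrt{n^2-4}}{2},\tfrac{n+\sqrt{n^2-4}}{2}\bigr)$, so that $(a,b)$ is an imaginary root; Theorem~\ref{bricks} then supplies the required quasi-simple $X$ and hence the regular component $\mathcal{C}$ containing both $M$ and $N$.

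First I would reduce to a convenient normal form. Using $\tau$ we may assume $M$ has quasi-top $X$ and $N$ has quasi-top $\tau^{-j}X$ with $j$ chosen so that the coordinates stay positive; by Lemma~\ref{dimensionvector} and the displayed formula $\udim[r]X=\sum_{i=0}^{r-1}\udim\tau^iX$, both $|M|$ and $|N|$ are of the shape $(\,\text{positive integer combination of }a,b\,)$. Writing the equality $|M|=|N|$ out, one side involves $A_r$ (or $s_{r-1}$) and the other involves $A_s$ together with entries $A_{2j\pm1},A_{2j}$ coming from the translate; the point is that, for $j$ large, the translate coefficients grow, so the coefficient of $a$ on the $N$-side dominates, and one can always choose the remaining free parameter so that the resulting line $\{|M|=|N|\}$ in the $(a,b)$-plane has rational slope strictly between $n-1$ and $n$ (using the crude bounds from Lemma~\ref{inequal0}, $n-1<A_{i+1}/A_i<n$). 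Any lattice point on this line far from the origin is then an imaginary root.

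Second, I would invoke Theorem~\ref{bricks}: for that imaginary root $(a,b)$ there is a brick, hence a quasi-simple module $X$ with $\udim X=(a,b)$, lying in some regular component $\mathcal{C}$. Since $\mathcal{C}$ is of type $A^\infty_\infty$, it contains $X_r=M$ and $(\tau^{-j}X)_s=N$, and by construction $|M|=|N|$, which is exactly what is required.

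The main obstacle is the slope bookkeeping in the reduction step: one must check that the linear equation $|M|=|N|$ is genuinely nondegenerate (not the zero equation, and not forced to have slope outside the imaginary cone) for \emph{every} pair $(r,s)$, including the small and the equal-parity cases, and that one still has enough freedom left — after fixing $r$, $s$, and the required slope window — to land on an actual lattice point that is an imaginary root rather than a real root or a non-root. I expect this to come down to the same kind of estimates on the $A_i$ and $B_i$ already assembled in Lemmas~\ref{inequal0}–\ref{inequa1}, together with a judicious choice of the translation parameter $j$ depending on $r$ and $s$; once the slope is pinned strictly between $n-1$ and $n$, infinitely many lattice points on the line are imaginary roots and the argument closes.
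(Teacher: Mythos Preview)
Your strategy is essentially the paper's: fix a quasi-simple $X$, impose $|X_s|=|(\tau^iX)_r|$, solve the resulting homogeneous linear equation for the ratio of the coordinates of $\udim X$, and then choose the shift $i$ so that this ratio lands in the imaginary cone. The paper carries this out explicitly, splitting into parity cases for $r$ and $s$, and then invokes Proposition~\ref{commondivisor} rather than Theorem~\ref{bricks} to produce a quasi-simple $X$ with the chosen dimension vector; your use of Theorem~\ref{bricks} is a harmless variant, and in fact slightly cleaner since it spares you the extra ``no common divisor $A_t$'' condition.

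There is, however, a concrete slip in your slope argument. You assert that the line $\{|M|=|N|\}$ can be arranged to have slope strictly between $n-1$ and $n$, citing Lemma~\ref{inequal0}, and conclude that lattice points on it are imaginary roots. But the interval $(n-1,n)$ is \emph{not} contained in the imaginary cone: the cone's upper boundary is $\tfrac{n+\sqrt{n^2-4}}{2}<n$, and the ratios $A_{i+1}/A_i$ from Lemma~\ref{inequal0} are precisely the slopes of the \emph{real} roots $(A_i,A_{i+1})=\udim P_{i+1}$, which sit outside the cone and converge to its boundary from above. So the crude bound you quote points in the wrong direction and cannot by itself force $(a,b)$ to be imaginary. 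The paper handles this --- the ``slope bookkeeping'' you correctly flagged as the main obstacle --- by computing the ratio $c/d$ explicitly in each parity case and showing that for a suitable $i$ it lies in $[\tfrac{1}{n-1},\,n-1]$, which is safely inside the cone; this step genuinely requires the explicit calculation (reducing to inequalities of the form $A_{2i}-A_{2i-2}\leq nA_s/A_r\leq A_{2i+2}-A_{2i}$) rather than just the monotonicity estimate from Lemma~\ref{inequal0}.
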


\begin{proof}
Without loss of generality, we may assume $s\geq r$. We need to find
some quasi-simple $\mathcal{K}_{n}$-module $X$ such that $|X_s|=|(\tau^iX)_r|$
for some $i\geq 1$.
Suppose that it is the case, and $\udim X=(c,d)$.
Then $\udim \tau^iX=(a,b)=(c,d)\Phi^i=(A_{2i+1}c-A_{2i}d,A_{2i}c-A_{2i-1}d)$.

We first assume $r$ and $s$ are odd.
Then $\udim X_s=A_s(c,d)$ and $\udim (\tau^iX)_r=A_r(a,b)$
and  $|X_s|=|(\tau^iX)_r|$ implies that
  $$A_r(A_{2i+1}+A_{2i})c-A_r(A_{2i}+A_{2i-1})d=A_s(c+d).$$
It follows that
  $$\frac{A_r(A_{2i}+A_{2i-1})+A_s}{A_r(A_{2i+1}+A_{2i})-A_s}=\frac{c}{d}.$$
We look for some $i$ satisfying the  following inequalities
  $$\frac{1}{n-1}\leq
  \frac{A_r(A_{2i}+A_{2i-1})+A_s}{A_r(A_{2i+1}+A_{2i})-A_s}=\frac{c}{d}\leq n-1.$$
Note that
\begin{displaymath}
  \begin{array}{cl}
      & \frac{A_r(A_{2i}+A_{2i-1})+A_s}{A_r(A_{2i+1}+A_{2i})-A_s}\leq n-1\\
     \Longleftrightarrow & \frac{n A_s}{A_r}\leq (n-1)A_{2i+1}-A_{2i}+(n-1)A_{2i}-A_{2i-1}\\
     \Longleftrightarrow & \frac{n A_s}{A_r}\leq (n A_{2i+1}-A_{2i})+(n+A_{2i}-A_{2i-1})-A_{2i+1}-A_{2i}\\
     \Longleftrightarrow & \frac{n A_s}{A_r}\leq A_{2i+2}+A_{2i+1}-A_{2i+1}-A_{2i}\\
     \Longleftrightarrow & \frac{n A_s}{A_r}\leq A_{2i+2}-A_{2i}.\\
   \end{array}
\end{displaymath}
Similarly, we have $\frac{n A_s}{A_r}>A_{2i}-A_{2i-2}$.
Since $s\geq r$, there is some $i$ such that the following inequalities hold
   \begin{equation}
     A_{2i}-A_{2i-2}\leq \frac{n A_s}{A_r}\leq A_{2i+2}-2A_{2i}.
   \end{equation}
Since the sequence $\{A_{i+1}-A_{i-1}\}_{i\geq 1}$ is increasing,
there is some $i$ such that $\frac{1}{n-1}\leq \frac{c}{d}\leq n-1$.
We fix such an index $i$ and let $c,d$ process no common divisors $A_t$ for any $t\geq 2$.
Let $X$ be an indecomposable $\mathcal{K}_n$-module with dimension
vector $\udim X=(c,d)$. Thus $X$ is  quasi-simple by Corollary \ref{commondivisor}.
Let $N=X_s$ and $M=(\tau^iX)_r$. Then $|M|=|N|$.

Now we assume $r$ is odd, whereas $s$ is even.
Then $\udim X_s=A_s(d,nd-c)$ and $\udim (\tau^iX)_r=A_r(a,b)$.
Thus $|X_s|=|(\tau^iX)_r|$ implies
$$A_r(A_{2i+1}+A_{2i})c-A_r(A_{2i}+A_{2i-1})d=A_s(d+nd-c).$$
and
$$\frac{A_r(A_{2i}+A_{2i-1})+(n+1)A_s}{A_r(A_{2i+1}+A_{2i})+A_s}=\frac{c}{d}.$$
Again we check the possibility that
$$\frac{1}{n-1}\leq \frac{A_r(A_{2i}+A_{2i-1})+(n+1)A_s}{A_r(A_{2i+1}+A_{2i})+A_s}=\frac{c}{d}\leq n-1.$$
  \begin{equation}
     \left\{\begin{array}{l}
          \frac{(n^2-2) A_s}{A_r}\geq A_{2i}-A_{2i-2}\\
          \frac{2A_s}{A_r}\leq A_{2i+2}-A_{2i}
            \end{array}
     \right.
  \end{equation}
Since $n\geq 3$, there exists some $i$ such that
the inequalities in (2) hold.
Thus $(c,d)$ is an imaginary root. We may assume that $c,d$ have no common divisor $A_t$ for any $t\geq 2$.
Let $X$ be an indecomposable module with dimension vector $(c,d)$. Then $X$
is quasi-simple by Corollary \ref{commondivisor}. Let $N=X_s$ and
$M=(\tau^i X)_r$. Then $|M|=|N|$.

The other cases follow similarly.
\end {proof}

\begin{example} 
Let $n=3$, $r=1$ and $s=2$.  We need to find some $i$ such that
$21=(n^2-2)A_2\geq A_{2i}-A_{2i-2}$. The only possibilities are $i=1$ and
$i=2$.  If  $i=1$, then
$\frac{A_1(A_{2i}+A_{2i-1})+(n+1)A_s}{A_1(A_{2i}+A_{2i+1})+A_s}
=\frac{8}{7}$.  Let $X$ be a quasi-simple module with dimension vector
$\udim X=(c,d)=(8,7)$.  Then $\udim X_2=(21,39)$, $\udim \tau X=(43,17)$
and thus $|X_2|=|\tau X|$.
In the case of  $i=2$, we may choose $(c,d)=(41,79)$
and let $X$ be a quasi-simple
module with dimension $\udim X=(c,d)$. Then $\udim X_2=(237,588)$ and
$\udim \tau^2 X=(596,229)$ and thus $|X_2|=|\tau^2X|$.
\end{example}

\subsection{Indecomposable modules in the same Coxeter orbit with the same length}

Let $\mathcal{K}_n$ be a wild Kronecker quiver with $n\geq 3$.
We have seen that given any pair of natural numbers $r,s$, there are always  regular
components containing two non-isomorphic
indecomposable modules with quasi-length $r,s$, respectively, and with the same length.  In this section,
the special case that $r=s$ will be studied in detail. The case $n=3$ was studied in \cite{C} with
the help of Fibonacci numbers.
Let $\Phi$ be the Coxeter matrix of  $\mathcal{K}_n$.

\begin{lemm}Let $(a,b)$ be a vector and $(c,d)=(a,b)\Phi^i$ for some $i\geq 0$.
Suppose that $c+d=a+b$. Let $(a',b')=(a,b)\Phi$ and $(c',d')=(c,d)\Phi^{-1}$.
Then $a'+b'=c'+d'$.
\end{lemm}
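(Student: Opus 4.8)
The statement is purely an identity about the Coxeter matrix $\Phi$ acting on $\mathbb{Z}^2$, so the plan is to reduce everything to linear algebra and then to an algebraic identity in the entries of $\Phi$ and its powers. Observe that ``$c+d=a+b$'' is the assertion that the vector $(a,b)-(c,d)=(a,b)(I-\Phi^i)$ lies in the hyperplane $\{(x,y):x+y=0\}$, i.e.\ is a multiple of $(1,-1)$. Likewise the desired conclusion $a'+b'=c'+d'$ says that $(a',b')-(c',d')=(a,b)\Phi-(c,d)\Phi^{-1}$ lies in that same hyperplane.

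First I would write $(a',b')-(c',d')=(a,b)\Phi-(a,b)\Phi^{i-1}=(a,b)\Phi^{i-1}(\Phi^{2-i}-I)$, and similarly express the hypothesis vector as $(a,b)(I-\Phi^i)$. Then the whole problem becomes: show that if $(a,b)(I-\Phi^i)$ is a scalar multiple of $(1,-1)$, then so is $(a,b)\Phi-(a,b)\Phi^{i-1}$. The cleanest way I expect to carry this out is to note that $I-\Phi^i$ and $\Phi-\Phi^{i-1}=\Phi^{-1}(\Phi^2-\Phi^i)$ differ by the factor $\Phi^{-1}$ composed with a shift; more precisely $\Phi - \Phi^{i-1} = \Phi^{-1}(\Phi^{2} - \Phi^{i})$, and I would try to relate $\Phi^{2}-\Phi^{i}$ to $I-\Phi^{i}$ via the Cayley--Hamilton relation $\Phi^{2}=(\operatorname{tr}\Phi)\Phi-(\det\Phi)I = (n^{2}-2)\Phi - I$ (since $\det\Phi=1$ and $\operatorname{tr}\Phi=n^2-2$). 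Using Cayley--Hamilton repeatedly, each power $\Phi^j$ equals $p_j\Phi+q_jI$ for integers $p_j,q_j$ (with $p_0=0,q_0=1$, $p_1=1,q_1=0$, $p_{j+1}=(n^2-2)p_j-p_{j-1}$, etc.), so $(a,b)(I-\Phi^i)=(a,b)(-p_i\Phi+(1-q_i)I)$ and $(a,b)(\Phi-\Phi^{i-1})=(a,b)((1-p_{i-1})\Phi-q_{i-1}I)$. Both are $\mathbb{Z}$-linear combinations of the two fixed vectors $(a,b)$ and $(a,b)\Phi$, so the condition ``multiple of $(1,-1)$'' in each case is a single scalar equation, and one only has to check that one such equation forces the other.

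Concretely, I would compute: $(a,b)$ is sent to $(1,-1)$-direction under $M:=\alpha\Phi+\beta I$ exactly when the sum of the coordinates of $(a,b)M$ vanishes, i.e.\ $(a,b)M\binom{1}{1}=0$. Writing $v=\binom{1}{1}$, this is $(a,b)(\alpha\Phi v+\beta v)=0$. A direct calculation gives $\Phi v=\binom{n^2-1+n}{-n-1}=\binom{(n+1)(n-1)+n}{-(n+1)}$; in fact $\Phi v = (n-1)\,w - v$ where $w=\binom{n}{-1}$... I would instead just compute $\Phi\binom{1}{1}$ and $\Phi^{-1}\binom{1}{1}$ explicitly and note they are related, which makes the two scalar conditions proportional with the same nonzero proportionality constant. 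The key observation I expect to use is that $\binom{1}{1}$ is \emph{not} an eigenvector of $\Phi$, but the two vectors $\Phi^{\pm 1}\binom{1}{1}$ together with $\binom{1}{1}$ satisfy the linear recurrence coming from Cayley--Hamilton, so membership of the relevant difference vectors in $k\binom{1}{-1}$ propagates.

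\textbf{Main obstacle.} The genuine content is small but the bookkeeping with signs and with the shift by one power of $\Phi$ is where an error is easy to make; the main point to get right is the identity $\Phi-\Phi^{i-1}=\Phi^{-1}(\Phi^{2}-\Phi^{i})$ and the fact that, after applying Cayley--Hamilton, the conclusion's scalar equation is a \emph{nonzero} multiple of the hypothesis's scalar equation (not merely implied when some coefficient vanishes). So the crux is to verify that the relevant $2\times 2$ determinant — equivalently, that $\Phi^{-1}\binom{1}{1}$ and $\binom{1}{1}$ are linearly independent, which holds since $\binom{1}{1}$ is not an eigenvector of $\Phi$ for $n\ge 3$ — is nonzero, so that the implication is an equivalence and the argument does not degenerate. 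Once that is in place, the identity $a'+b'=c'+d'$ follows immediately. (Alternatively, and perhaps more transparently for the write-up, one can diagonalize $\Phi$ over $\mathbb{R}$ with eigenvalues $\lambda,\lambda^{-1}$ where $\lambda+\lambda^{-1}=n^2-2$, decompose $\binom{1}{1}=u_++u_-$ into eigencomponents, express the sum-of-coordinates functional in this basis, and read off that the hypothesis and conclusion each amount to one linear relation between $\lambda^i$-coefficients that are visibly equivalent; I would present whichever version is shorter.)
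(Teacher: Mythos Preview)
Your setup is sound, but your Cayley--Hamilton route has a genuine gap: you assert that ``after applying Cayley--Hamilton, the conclusion's scalar equation is a nonzero multiple of the hypothesis's scalar equation,'' yet this is precisely the content of the lemma and does \emph{not} follow from Cayley--Hamilton alone. Writing $\Phi^{j}=p_{j}\Phi+q_{j}I$ (so $q_{j}=-p_{j-1}$), the hypothesis vector is $-p_{i}\,\Phi\binom{1}{1}+(1-q_{i})\binom{1}{1}$ and the conclusion vector is $(1-p_{i-1})\,\Phi\binom{1}{1}-q_{i-1}\binom{1}{1}$; since $\binom{1}{1}$ and $\Phi\binom{1}{1}$ are independent, parallelism of these two vectors is equivalent to the identity $p_{i-1}^{2}-p_{i}p_{i-2}=1$. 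That identity is exactly (a rescaling of) $A_{j}^{2}-A_{j-1}A_{j+1}=1$, which is what the paper invokes in its one-line proof. What you call the ``crux'' --- that $\binom{1}{1}$ is not an eigenvector --- is the trivial non-degeneracy check; the actual crux is the vanishing of the $2\times 2$ determinant of coefficients, not its non-vanishing. (Note also that for $i=2$ the conclusion is tautological and for $i=0$ the lemma is false, so ``nonzero multiple'' is not uniformly the right framing.)

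Your parenthetical diagonalization approach, by contrast, does work and is arguably cleaner than the paper's direct computation: with $\Phi$-eigenvalues $\lambda,\lambda^{-1}$, both the hypothesis $(a,b)(I-\Phi^{i})\binom{1}{1}=0$ and the conclusion $(a,b)(\Phi-\Phi^{i-1})\binom{1}{1}=0$ reduce (for $i\ge 1$) to the same relation $c_{+}\alpha\lambda^{i}+c_{-}\beta=0$ in the eigenbasis, after cancelling $1-\lambda^{i}$ and $\lambda(1-\lambda^{i-2})$ respectively. This gives the implication immediately and makes the symmetry $j\leftrightarrow i-j$ transparent. So: drop the misidentified ``crux,'' and either supply the identity $p_{i-1}^{2}-p_{i}p_{i-2}=1$ in the Cayley--Hamilton argument (which then matches the paper's use of $A_{i}^{2}-A_{i-1}A_{i+1}=1$) or simply present the diagonalization, which is a different and tidier route.
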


\begin{proof}
This follows by direct calculation using $A^2_i-A_{i-1}A_{i+1}=1$.
\end{proof}

If $\mathcal{C}$ is a regular component and $M=X[r]$ is an indecomposable
module of quasi-length $r$ with quasi-socle $X\in\mathcal{C}$,
the mesh-complete
full subquiver of $\mathcal{C}$, defined by the vertices $\tau^{-i}X[l]$ with
$i\geq 0$ and $i+l\leq m$ is called a wing, and denoted by $\mathcal{W}(M)$. For an
indecomposable regular module $M$, we denote by $\qs(M)$ and $\qt(M)$ the
quasi-socle and quasi-top of $M$, respectively.

\begin{theo} \label{sameorbit} Let $\mathcal{K}_n$ be a wild Kronecker quiver with $n\geq 3$. Let $\mathcal{C}$ be a regular component and $M$ and $N$ be indecomposable modules in $\mathcal{C}$ such that $|M|=|N|$.
Then the following are equivalent:
\begin{enumerate}
 \item $M$ and $N$ are in the same $\tau$-orbit, i.e. $\ql(M)=\ql(N)$.
  \item  $\mathcal{C}$  contain a quasi-simple module with dimension vector $(m,m)$ or $(m,(n-1)m)$. 
 \end{enumerate}
\end{theo}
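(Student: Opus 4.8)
The plan is to analyze what the equality $|M|=|N|$ together with $\ql(M)=\ql(N)$ forces on dimension vectors, exploiting the formula $\udim[r]X=\sum_{i=0}^{r-1}\udim\tau^iX$ and Lemma~\ref{dimensionvector}. For the direction $(1)\Rightarrow(2)$, write $M=(\tau^iX)_r$ and $N=(\tau^jX)_r$ for a common quasi-simple $X\in\mathcal{C}$ with $\udim X=(a,b)$, $i<j$. Using the coordinate formula $\udim\tau^lX=(A_{2l+1}a-A_{2l}b,\,A_{2l}a-A_{2l-1}b)$, compute $|M|$ and $|N|$ as scalar multiples of $A_r$ (or $s_{r-1}$, depending on the parity of the quasi-length of $X$ relative to the orbit) times linear forms in $a,b$. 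Setting $|M|=|N|$ and cancelling the common factor $A_r$ yields a linear relation between $a$ and $b$ whose coefficients are differences of the $A_{2l+1}+A_{2l}$ and $A_{2l}+A_{2l-1}$; solving for $a/b$ and simplifying with Lemma~\ref{equa}(5) should pin $a/b$ down to exactly $1$ or $n-1$ (up to which endpoint of the orbit one starts from), so that the quasi-simple module at the appropriate position in $\mathcal{C}$ has dimension vector a multiple of $(1,1)$ or $(n-1,1)$ — equivalently, by the Proposition relating $(B_{2r-1},B_{2r+1})$-type quasi-tops to modules of dimension $(m,m)$ or $(m(n-1),m)$, the component $\mathcal{C}$ contains a quasi-simple of dimension vector $(m,m)$ or $(m,(n-1)m)$. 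The key auxiliary fact here is the little lemma just above the theorem: if $(c,d)=(a,b)\Phi^i$ and $c+d=a+b$, then the same length-preservation propagates one step along the $\tau$-orbit, which is what lets one slide from "some $\tau^iX,\tau^jX$ in the orbit have equal length" to "a distinguished quasi-simple in the orbit has the symmetric dimension vector."

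For the converse $(2)\Rightarrow(1)$, suppose $\mathcal{C}$ contains a quasi-simple $X$ with $\udim X=(m,m)$ (the case $(m,(n-1)m)$ is symmetric, obtained by the reflection $b\leftrightarrow$ appropriate coordinate, or by passing to the opposite quiver as elsewhere in the paper). Then by the quoted lemma, $\udim\tau X=(m,m)\Phi$ has coordinate sum equal to... one computes $(m,m)\Phi=(m(n^2-1)-mn,\,mn-m)=(m(n^2-n-1),\,m(n-1))$, whose coordinate sum is $m(n^2-2)\neq 2m$ in general — so the relevant symmetry is not that $\tau X$ has equal coordinates, but that $X$ and $\tau^{-1}X$ (or a suitable pair $\tau^jX$, $\tau^{-j}X$ around the symmetric point) have equal total dimension. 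Concretely, I expect that $|{\tau^{-1}X}|=|X|$ fails but $|(\tau^{-j}X)|=|(\tau^{j}X)|$ type symmetry, or better: the right statement is that $|{}[r](\tau^{i}X)|=|{}[r](\tau^{-i}Y)|$ for the quasi-simple $Y$ that is the "mirror" of $X$ across the symmetric vertex. The cleanest route: directly verify that if $\udim X=(m,m)$ then $|X_r| = |(\tau^{1-?}X)_r|$ — i.e., exhibit an explicit pair of modules of the same quasi-length $r$ in $\mathcal{C}$ with equal length, by symmetry of the linear form $a+b$ under the involution of $\mathbb{Z}^2$ that the component inherits from the symmetry $(m,m)$. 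Then $|M|=|N|$ with $M,N$ of equal length in $\mathcal{C}$, and we are done; the uniqueness of indecomposables by dimension vector in a regular component (the cited result of \cite{Z}) guarantees these are genuinely the $M,N$ in question once lengths match, because Theorem~\ref{bound} says there are at most two indecomposables of each length, and the two we produced are non-isomorphic.

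The main obstacle I anticipate is bookkeeping the parities. The dimension-vector formula in Lemma~\ref{dimensionvector} bifurcates according to whether the quasi-length index is odd or even, and sliding along a $\tau$-orbit by $\Phi^{\pm1}$ interchanges these regimes; combined with the three indices $i,j,r$ that may independently be odd or even, one gets the same eightfold case table as in the proof of Theorem~\ref{bound}. For each case one must check that the linear equation in $a,b$ produced by $|M|=|N|$ has, up to the $A_r$ (resp. $s_{r-1}$) factor, solution ratio exactly $1$ or $n-1$; the identities Lemma~\ref{equa}(2)--(5) — especially the "Fibonacci-like" identity (5), $A_iA_{j+k}-A_jA_{i+k}=A_{i-j}A_k$ — should collapse each case, but verifying all eight is the labor-intensive part. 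A secondary subtlety is making sure the quasi-simple witnessing (2) actually lies in $\mathcal{C}$ rather than merely having the right dimension vector: this is where one invokes that in a regular component the indecomposables are determined by dimension vectors and that the wing/$\tau$-orbit combinatorics of $\mathcal{C}$ force the symmetric vector to be realized by the quasi-socle or quasi-top of $M$ itself. I would organize the writeup as: (i) the two reductions to the "$X$ common quasi-simple" normal form; (ii) the case table for $(1)\Rightarrow(2)$ citing Lemma~\ref{equa}(5) and Lemma~\ref{inequa1}/\ref{inequa2} to eliminate all but the symmetric ratios; (iii) the symmetry argument for $(2)\Rightarrow(1)$ using the lemma preceding the theorem; (iv) a closing remark that the $(m,(n-1)m)$ case is the dual of the $(m,m)$ case under the standard opposite-quiver duality used earlier in the paper.
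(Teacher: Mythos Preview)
Your plan for $(1)\Rightarrow(2)$ is in spirit the paper's argument, but you are overcomplicating it. The paper works directly with $M$ rather than its quasi-socle: since $\ql(M)=\ql(N)$ means $N=\tau^iM$ for some $i$, the propagation lemma you cite reduces at once to $i\in\{1,2\}$. Writing $\udim M=(x,y)$ and equating $|M|=|\tau^iM|$ gives a single linear equation in $x,y$; for $i=1$ one reads off $y/x=n-1$, and for $i=2$ one gets $y/x=(n^2-n-1)/(n-1)$, which after one application of $\Phi$ yields a multiple of $(1,1)$. No eightfold parity table is needed at this stage, and Lemmas~\ref{inequa1}--\ref{inequa2} play no role. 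Only \emph{after} this, when descending from $\udim M$ to the dimension vector of a quasi-simple, does one split on the parity of $\ql(M)$ via Lemma~\ref{dimensionvector}, and that is a two-case rather than an eight-case check.

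Your route for $(2)\Rightarrow(1)$ is genuinely different from the paper's. The paper argues by contradiction: assuming $\ql(M)=r>t=\ql(N)$ with the quasi-simple $Z$ of dimension $(1,1)$ in $\mathcal{C}$, it arranges the wings $\mathcal{W}(M)$, $\mathcal{W}(N)$ on either side of $Z$ and uses the growth inequality $2\sum_{l=0}^s\udim\tau^{-l}Z<\udim\tau^{-(s+1)}Z$ to force $|M|<|N|$. Your approach instead exploits that the coordinate swap $\sigma$ intertwines $\Phi$ and $\Phi^{-1}$, so that when $\udim Z$ is $\sigma$-fixed (respectively swapped with $\udim\tau Z$ in the $(m,(n-1)m)$ case) the component carries an involution preserving quasi-length and total dimension; the mirror $M'$ of $M$ then has $|M'|=|M|$ and $\ql(M')=\ql(M)$, and Theorem~\ref{bound} forces $N\in\{M,M'\}$. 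This is correct and arguably cleaner, but two points need to be made explicit in your writeup: you must treat the degenerate case $M\cong M'$ (pass to $N$ and its mirror $N'$; if also $N\cong N'$ then $\udim M=\udim N$ and \cite{Z} gives $M\cong N$), and your argument imports Theorem~\ref{bound}, whereas the paper's growth argument is self-contained.
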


\begin{proof}
(1) $\Rightarrow$ (2). If  $|M|=|\tau^iM|$, then by above lemma, we have $|\tau M|=|\tau^{i-1}M|$. Thus we may
assume, without loss of generality, that
$|\tau M|=|M|$ if $i$ is odd, or $|\tau^2 M|=|M|$ if $i$ is even.
Note that $\udim \tau^iM=(A_{2i+1}x-A_{2i}y, A_{2i}x-A_{2i-1}y)$. It follows that
$$\frac{A_{2i+1}+A_{2i}-1}{A_{2i}+A_{2i-1}+1}=\frac{y}{x}.$$
We first assume that $|\tau M|=|M|$.  Thus $$\frac{y}{x}=\frac{A_3+A_2-1}{A_2+A_1+1}=\frac{n^2-1+n-1}{n+1+1}=n-1.$$
Thus $(x,y)=(m,(n-1)m)$ for some $m\geq 1$.
If $|\tau^2 M|=|M|$, then $$\frac{y}{x}=\frac{A_5+A_4-1}{A_4+A_3+1}=\frac{n^2-n-1}{n-1}.$$ Since $(n-1,n^2-n-1)\Phi=(1,1)$, we have $\udim\tau M=(m,m)$ for some $m\geq 1$.

Let $(m,m)$ be a dimension vector of an indecomposable regular module $M$ with quasi-length $\ql(M)=r\geq 1$. Assume that $M=X_r$ for some quasi-simple module $X$ with dimension vector $(a,b)$.
If $r$ is odd, then  $(m,m)=A_r(a,b)$ and $a=b$.
If $r$ is even, then $(m,m)=A_r(b,nb-a)$. Thus $A_rb=m$ and $nm-A_ra=m$. Therefore,
$(m,m)=A_r((n-1)b,b)$ and $a=(n-1)b$.
Similarly, if $(m,(n-1)m)$ is a dimension vector of an indecomposable module $M$ with quasi-length $\ql(M)=r\geq 1$. Let $M=X_r$ for some quasi-simple module $X$ with dimension vector $(a,b)$. Then
$(m,(n-1)m)=A_r(a,(n-1)a)$ and $b=(n-1)a$
if $r$ is odd, and $(m,(n-1)m)=A_r(a,a)$ and $a=b$ if $r$ is even.

(2) $\Rightarrow$ (1).
Note that $\mathcal{C}$ contains a quasi-simple module $Z$ with dimension
$(x,x)$ or $(x,(n-1)x)$ for some $a\geq 1$.
We consider the case $\udim Z=(x,x)$, and without loss of generality, we may assume $x=1$.
The other case follows similarly.

Let $\ql(M)=r>t=\ql(N)$. Thus $M=X_r$ and $N=Y_t$ for some quasi-simple modules
$X$ and $Y$ with dimensional vectors $(a,b)$ and $(c,d)$, respectively.
Since $\mathcal{C}$ is symmetric, we may assume $Z=\tau^i X=\tau^jY$ with $i,j>0$.
We may also assume that the wings $\mathcal{W}(M)$ and  $\mathcal{W}(N)$ intersect empty.

Assume $r>t$ are both odd numbers.
Then $\udim M=A_r(a,b)$ and $\udim N=(A_t(c,d)$.  Then $A_r(a+b)=A_t(c+d)$ by assumption and thus $a+b<c+d$ since $r>t$. 
Thus $0\leq i<j$.  Let $M'=\qt(M)$ and $N'=\qs(N)$. then $M'=\tau^lN'$ for some $l>0$.
Note that for each $s\geq 1$, $2\sum_{l=0}^s\udim \tau^{-l}Z<\udim \tau^{-(l+1)}Z$. 
This follows by induction and the fact $\udim\tau^{-1}Z=(n-1,n^2-n-1)>(n-1,n-1)$. 
Therefore, $\udim M<2\sum_{l=0}^k\tau^{-i}Z<\udim N'<\udim N$ where $k>0$ is the natural number with $\tau^k M'=Z$. This is a contradiction.

Therefore,  we have $\ql(M)=\ql(N)$ and thus $\udim M=(e,f)$ if and only if $\udim (N)=(f,e)$.
\end{proof}

\section{Regular components with common sets of dimension vectors}

Let $\mathcal{K}_n$ be a wild Kronecker quiver with $n\geq 3$. For a regular component $\mathcal{C}$ of the AR-quiver, let $\udim \mathcal{C}$ denote the set of the dimension vectors of the indecomposable modules in $\mathcal{C}$.  In this section, we will discuss when $\udim\mathcal{C}=\udim \mathcal{D}$ for different regular component $\mathcal{C}$ and $\mathcal{D}$.
The following lemma is straightforward.
\begin{lemm}\label{compare} Let $\mathcal{K}_n$ be a wild Kronecker quiver with $n\geq 3$ and $(a,b)$ an imaginary root. 
\begin{enumerate}

\item If $(c,d)=(a,b)\Phi^{-1}=(nb-a,(n^2-1)b-na)$,
then $\frac{d}{c}>\frac{nb-a}{b}>\frac{b}{a}$.
\item If  $X$ is a quasi-simple module with dimension vector $(a,b)$, then
$\frac{b}{a}=\frac{(\udim X)_2}{(\udim X)_1}<\frac{(\udim X_2)_2}{(\udim X_2)_1}<\frac{(\udim \tau^{-1}X)_2}{(\udim \tau^{-1}X)_1}$.
\end{enumerate}
\end{lemm}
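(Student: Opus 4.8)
The plan is to derive both statements from the single fact that $q((a,b))<0$, together with the positivity of the coordinates of a positive root. As a preliminary I would record that, $(a,b)$ being a positive imaginary root, we have $a,b>0$ and $\frac{a}{b}<\frac{n+\sqrt{n^2-4}}{2}<n$, whence $c=nb-a>0$ as well; moreover $q$ is $\Phi$-invariant, so $(c,d)$ is again a positive imaginary root and in particular $d>0$ (this will also fall out of the computation below).

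For part (1), observe first that $\frac{nb-a}{b}=\frac{c}{b}$, so the two asserted inequalities are $\frac{d}{c}>\frac{c}{b}$ and $\frac{c}{b}>\frac{b}{a}$. Since $a,b,c>0$, clearing denominators reduces these to $bd-c^{2}>0$ and $ac-b^{2}>0$. I would then simply substitute $c=nb-a$ and $d=(n^{2}-1)b-na$: a one-line computation gives $ac-b^{2}=nab-a^{2}-b^{2}=-q((a,b))$ and likewise $bd-c^{2}=nab-a^{2}-b^{2}=-q((a,b))$. As $q((a,b))<0$, both quantities are positive, which is (1). Conceptually, each of the two ``gaps'' in (1) is $-q((a,b))$ divided by the positive number $bc$, resp. $ab$.

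For part (2), the task is to recognise the three displayed ratios as the three quantities already treated in (1). Since $X$ is quasi-simple, the Auslander--Reiten sequence starting at $X$, namely $0\ra X\ra X_{2}\ra\tau^{-1}X\ra 0$, has indecomposable middle term, which by the defining construction of the $X_{i}$ is exactly $X_{2}$; hence $\udim X_{2}=\udim X+\udim\tau^{-1}X=(a,b)+(c,d)=(nb,\ n^{2}b-na)=(nb,\ n(nb-a))$ (this value is also recorded in the proof of Lemma~\ref{dimensionvector}), so $\frac{(\udim X_{2})_{2}}{(\udim X_{2})_{1}}=\frac{nb-a}{b}$. Together with $\udim X=(a,b)$ and $\udim\tau^{-1}X=(c,d)$, the chain to be proved is precisely $\frac{b}{a}<\frac{nb-a}{b}<\frac{d}{c}$, i.e. part (1) applied to the imaginary root $(a,b)$. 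Alternatively one can avoid the explicit value of $\udim X_{2}$ and argue that the mediant $\udim X+\udim\tau^{-1}X$ of two ratio-ordered positive vectors lies strictly between them, together with $\frac{b}{a}<\frac{d}{c}$, which amounts to $ad-bc=-n\,q((a,b))>0$.

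I do not expect any genuine obstacle here; as the paper remarks, the lemma is straightforward. The only two points needing a little care are: checking $c=nb-a>0$, so that multiplying the inequalities through by $c$ preserves their direction; and invoking the correct structural identity $\udim X_{2}=\udim X+\udim\tau^{-1}X$ for quasi-simple $X$, which rests on $X_{2}$ being the indecomposable middle term of the AR sequence $0\ra X\ra X_{2}\ra\tau^{-1}X\ra 0$. Everything else is a direct substitution into the quadratic form $q$.
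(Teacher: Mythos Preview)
Your argument is correct and is exactly the kind of direct verification the paper has in mind: the paper offers no proof beyond the remark that the lemma is ``straightforward'', and your reduction of both inequalities in (1) to $-q((a,b))>0$, together with the identification $\udim X_2=\udim X+\udim\tau^{-1}X=(nb,n(nb-a))$ for (2), is precisely the intended one-step computation.
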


The indecomposable modules in a regular component are uniquely determined by their dimension vectors.   
Next it will be shown that the set of the dimension vectors of the indecomposable modules in a regular component is uniquely determined by a Coxeter orbit and quasi-length, and also uniquely determined by two Coxeter orbits. . 

\begin{theo}\label{twoorbits}
Let $n\geq 3$ and $\mathcal{C}$ and $\mathcal{D}$ be two different regular components of a wild Kronecker quiver $\mathcal{K}$.
Then the following are equivalent:
\begin{enumerate}

\item  There are quasi-simple
modules $X\in\mathcal{C}$ and $Y\in\mathcal{D}$ such that $\udim X=\udim Y$.
\item There are indecomposable modules  $X\in\mathcal{C}$ and $Y\in\mathcal{D}$ with $\udim X=\udim Y$ and $\ql(X)=\ql(Y)$.
\item  $\udim\mathcal{C}=\udim \mathcal{D}$.
\item  There are indecomposable modules $M,M'\in\mathcal{C}$ and $N,N'\in\mathcal{D}$
with $\ql(M)\neq \ql(M')$ and $\ql(N)\neq \ql(N')$ such that $\udim M=\udim N$,
$\udim M'=\udim N'$.
\end{enumerate}
\end{theo}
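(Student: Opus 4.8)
The plan is to prove the chain of implications
\[
(3)\Rightarrow(4)\Rightarrow(2)\Rightarrow(1)\Rightarrow(3),
\]
since $(3)\Rightarrow(4)$ is immediate (any two indecomposables of different quasi-lengths in $\mathcal{C}$ furnish $M,M'$, and since $\udim\mathcal{C}=\udim\mathcal{D}$ there are modules in $\mathcal{D}$ with the same dimension vectors; uniqueness of indecomposables by dimension vector in a regular component is irrelevant here, we just need existence). Likewise $(2)\Rightarrow(1)$ should come down to showing that if $X\in\mathcal{C}$ and $Y\in\mathcal{D}$ are indecomposable with $\udim X=\udim Y$ and $\ql(X)=\ql(Y)=r$, then their quasi-socles $\qs(X)$ and $\qs(Y)$ are already quasi-simple modules with equal dimension vector: indeed $\udim X=\sum_{i=0}^{r-1}\udim\tau^i(\qs(X))$ and the analogous formula for $Y$, and applying $\Phi$ repeatedly one reduces to the case $r=1$; more directly, $\udim X$ together with the integer $r$ determines $\udim\qs(X)$ by solving a linear recursion (this is essentially the content of Lemma~\ref{dimensionvector} run backwards, using $A_r\neq 0$), and the solution depends only on $\udim X$ and $r$, not on the component, so $\udim\qs(X)=\udim\qs(Y)$.

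The implication $(1)\Rightarrow(3)$ is the substantive one. Given quasi-simple $X\in\mathcal{C}$ and $Y\in\mathcal{D}$ with $\udim X=\udim Y=(a,b)$, I want to show every dimension vector occurring in $\mathcal{C}$ also occurs in $\mathcal{D}$ and conversely. By symmetry it suffices to show $\udim\mathcal{C}\subseteq\udim\mathcal{D}$. Every element of $\udim\mathcal{C}$ has the form $\udim (\tau^jX)_s$ for some $j\in\mathbb{Z}$, $s\geq 1$, and by Lemma~\ref{dimensionvector} together with the $\tau$-action formula of the second lemma in Section~2, this is an explicit integer vector depending only on $(a,b)$, $j$, $s$ — and on nothing about $\mathcal{C}$. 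Since $Y$ is quasi-simple with the same dimension vector $(a,b)$, the module $(\tau^jY)_s\in\mathcal{D}$ has exactly the same dimension vector. Hence $\udim\mathcal{C}\subseteq\udim\mathcal{D}$, and equality follows. In other words, the whole point is that the dimension vectors in a regular component are computed from those of any single quasi-simple module in it by the universal formulas of Section~2 and Lemma~\ref{dimensionvector}, so two components sharing one quasi-simple dimension vector share all of them.

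The only real obstacle is the step $(4)\Rightarrow(2)$: here we are handed modules $M,M'\in\mathcal{C}$ and $N,N'\in\mathcal{D}$ with $\udim M=\udim N$, $\udim M'=\udim N'$, $\ql(M)\neq\ql(M')$, $\ql(N)\neq\ql(N')$, and we must produce a single pair with equal dimension vector \emph{and} equal quasi-length. The idea is to look at the quasi-socles: let $X=\qs(M)$, $X'=\qs(M')$, quasi-simple in $\mathcal{C}$, and $Y=\qs(N)$, $Y'=\qs(N')$, quasi-simple in $\mathcal{D}$. As noted above, $\udim X$ is determined by $\udim M$ and $\ql(M)$, so writing $r=\ql(M)$, $r'=\ql(M')$, $t=\ql(N)$, $t'=\ql(N')$, the data give two equations relating the dimension vectors $(a,b)=\udim X$, $(a',b')=\udim X'$ of two quasi-simples of $\mathcal{C}$ to $(c,d)=\udim Y$, $(c',d')=\udim Y'$ of two quasi-simples of $\mathcal{D}$. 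Now $X$ and $X'$ lie in the same component, so $(a',b')=\udim(\tau^{\ell}X)$ for some $\ell\in\mathbb{Z}$ (after replacing $X'$ by the appropriate $\tau$-translate of $X$), and similarly $(c',d')=\udim(\tau^{m}Y)$; using the explicit $\Phi$-power formulas and the identities of Lemma~\ref{equa}, the hypotheses $\udim M=\udim N$ and $\udim M'=\udim N'$ become two polynomial identities in $A_\bullet$ that, by the rigidity results Lemma~\ref{inequa1} and Lemma~\ref{inequa2} (the same non-coincidence lemmas used in the proof of Theorem~\ref{bound}), force $\ell=m$ and in fact $\udim X=\udim Y$ up to the same translate, giving $(2)$. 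I expect this matching argument — extracting from the two "mixed" coincidences a single "aligned" one — to require a careful but routine case analysis on the parities of $r,r',t,t'$, entirely parallel to the tabular case split in the proof of Theorem~\ref{bound}.
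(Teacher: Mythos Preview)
Your cycle $(3)\Rightarrow(4)\Rightarrow(2)\Rightarrow(1)\Rightarrow(3)$ and the paper's scheme agree on the easy implications, and both isolate the passage from (4) to (1)/(2)/(3) as the substantive step requiring a parity case analysis. Two points need correction, however.

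First, a minor one: in $(3)\Rightarrow(4)$ you must also verify $\ql(N)\neq\ql(N')$, and your remark that Zhang's uniqueness result is ``irrelevant here'' is not quite right. If $\ql(N)=\ql(N')$ then $N'=\tau^jN$ for some $j\neq 0$, whence $\udim M'=\udim\tau^jM$; Zhang's theorem then forces $M'=\tau^jM$ and so $\ql(M)=\ql(M')$, a contradiction. (Alternatively, the paper closes the cycle with a direct $(3)\Rightarrow(1)$: pick a quasi-simple $X\in\mathcal{C}$ of minimal dimension vector, find $Y\in\mathcal{D}$ with $\udim Y=\udim X$, and observe $Y$ must be quasi-simple by minimality.)

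Second, and more substantively: your plan for $(4)\Rightarrow(2)$ invokes Lemma~\ref{inequa2} by analogy with Theorem~\ref{bound}, but that analogy fails. Theorem~\ref{bound} concerns three modules with equal \emph{length} $a+b$, which produces an additive identity of the shape $\frac{A_{\bullet}}{A_t}+\frac{A_{\bullet}}{A_r}=\frac{A_{\bullet}}{A_s}$, exactly what Lemma~\ref{inequa2} excludes. Here, by contrast, the hypotheses are equalities of \emph{dimension vectors}, e.g.\ $A_r(a,b)=A_s(c,d)$ and $A_{r'}(a,b)=A_{s'}(d',nd'-c')$ (with $(c',d')=\udim\tau^iY$), and dividing coordinates yields equalities of \emph{ratios} such as $\frac{b}{a}=\frac{d}{c}$ or $\frac{d}{c}=\frac{nd'-c'}{d'}$. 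No sum of $A$-ratios appears, so Lemma~\ref{inequa2} has no grip. The tool the paper actually uses is Lemma~\ref{compare}: the ratio $\frac{(\udim\tau^iY)_2}{(\udim\tau^iY)_1}$ is strictly monotone in $i$, and the ratio for $Y_2$ sits strictly between those for $Y$ and $\tau^{-1}Y$. This forces the $\tau$-shift $i$ to be $0$ (or yields an outright contradiction in the mixed-parity cases), after which Lemma~\ref{inequa1} gives $r=s$, $r'=s'$, hence $\udim X=\udim Y$. So your case analysis is the right shape, but the key rigidity input is Lemma~\ref{compare}, not Lemma~\ref{inequa2}.
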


\begin{proof}(2)$\Longrightarrow$ (1)\quad  Let, for example,
$r=\ql(X)=\ql(Y)>1$ be even.  Then $\udim X=A_r(b,nb-a)$ and $\udim Y=A_r(d,nd-c)$
where $X',Y'$ are quasi-simple modules with $\udim X'=(a,b)$ and $\udim Y'=(c,d)$ and
$X=X'_r$ and $Y=Y'_r$. It follows that $(a,b)=(c,d)$.

(1) $\Longrightarrow$ (2), (3), and (3) $\Longrightarrow$ (4) are straightforward.

(4) $\Longrightarrow$ (3)\quad
Let $M,M'\in\mathcal{C}$ be two indecomposable modules
with quasi-lengths $r\neq r'$, respectively. Similarly, let $N,N'\in\mathcal{D}$ be indecomposable modules
with quasi-lengths $s\neq s'$, respectively.
Assume $\udim M=\udim N$ and $\udim M'=\udim N'$.
Let $M=X_r$ with $\udim X=(a,b)$ and $N=Y_s$ with $\udim N=(c,d)$.
Since $\udim\tau^{i}M'=\udim \tau^i N'$, we may assume, without loss of generality,
$M'=X_{r'}$.  Thus $N'=Y'_{s'}$, where $Y'$ is a quasi-simple module with $\udim Y'=(c',d')$ and
$Y'=\tau^iY$ for some $i$.\\
\underline{Case  $r,s,r'$ odd and $s'$ even.}
In this case, we have
$$A_r(a,b)=A_s(c,d), \quad A_{r'}(a,b)=A_{s'}(c',d').$$

\begin{equation}
\left\{
\begin{array}{rcl}
A_ra &=& A_sc,\\
A_rb &=& A_sd,\\
A_{r'}a &=& A_{s'}d',\\
A_{r'}b &=& A_{s'}(nd'-c').
\end{array}
\right.
\end{equation}
It follows that $$\frac{d}{c}=\frac{nd'-c'}{d'}.$$
This is impossible by Lemma \ref{compare}.\\
\underline{Case  $r,r'$ odd and $s,s'$ even.}
In this case, we have
$$A_r(a,b)=A_s(d,nd-c), \quad A_{r'}(a,b)=A_{s'}(c',d').$$

\begin{equation}
\left\{
\begin{array}{rcl}
A_ra &=& A_sd,\\
A_rb &=& A_s(nd-c),\\
A_{r'}a &=& A_{s'}d',\\
A_{r'}b &=& A_{s'}(nd'-c').
\end{array}
\right.
\end{equation}
It follows that $$\frac{nd-c}{d}=\frac{nd'-c}{d'}.$$
Since $(nd,n^2d-nc)$ and $(nd', n^2d'-nc')$ are dimension vectors
of $Y_2$ and $Y'_2$, respectively,
the above equality hold only if $d=d'$ and $c=c'$, i.e. $Y_2=Y_2'$ by Lemma  \ref{compare}.
Therefore, $\frac{A_{r'}}{A_r}=\frac{A_{s'}}{A_s}$.
This contradicts Lemma \ref{inequa1}. 

All the other cases follow similarly. 

(3) $\Longrightarrow$ (1) Let $X\in\mathcal{C}$ be a quasi-simple module such that
$\udim X$ is minimal in $\mathcal{C}$.  Then by assumption there is some $Y\in\mathcal{D}$ such that
$\udim X=\udim Y$. Thus $Y$ is quasi-simple,
by the minimality of  $\udim Y$.
\end{proof}

\end{document}